\newtheorem{thm}{Theorem}[section]
\newtheorem{lem}[thm]{Lemma}
\newtheorem{prop}[thm]{Proposition}
\newtheorem*{thm*}{Theorem}
\newtheorem*{cnj*}{Conjecture}
\theoremstyle{definition}
\newtheorem{rmk}[thm]{Remark}
\newtheorem{eg}[thm]{Example}
\newtheorem{dfn}[thm]{Definition}
\newtheorem*{remark}{Remark}
\newtheorem*{conj*}{Conjecture}
\newcommand {\PP}{\mathbb{P}}
\newcommand{\cA}{\mathcal{A}}
\newcommand{\cB}{\mathcal{B}}
\newcommand{\cT}{\mathcal{T}}
\newcommand{\cZ}{\mathcal{Z}}
\newcommand{\cO}{\mathcal{O}}
\newcommand{\m}{\mathfrak m}
\DeclareMathOperator{\Der}{\mathrm{Der}}
\DeclareMathOperator{\Proj}{Proj}
\DeclareMathOperator{\HH}{H}
\newcommand{\C}{\mathbb C}
\newcommand{\Q}{\mathbb Q}
\newcommand{\p}{\mathbb P}
\newcommand{\kk}{\mathbb{K}}
\begin{document}
%\linenumbers
%\linenumberdisplaymathath

%%%%%%%%%%%%%%%%%%%% Author(s) and Address %%%%%%%%%%%%%%%%%%%%%%%%%%%%%%%%%%%
%%%%%%%%%%%%%%%%%%%%%%%%%%%%%%%%%%%%%%%%%%%%%%%%%%%%%%%%%%%%%%%%%%%%%%%%%%%%%%

\title{Free curves, Eigenschemes and Pencils of curves}

\author[R.Di Gennaro]{Roberta Di Gennaro}
\address{Di Gennaro: Dipartimento di Matematica e Applicazioni “Renato Caccioppoli” \\
 Universit\`a degli Studi di Napoli Federico II\\
  80126 Napoli, Italy}
\email{digennar@unina.it}

\author[G. Ilardi]{Giovanna Ilardi}
\address{Ilardi: Dipartimento di Matematica e Applicazioni “Renato  Caccioppoli” \\
  Universit\`a degli Studi di Napoli Federico II\\
  80126 Napoli, Italy}
\email{giovanna.ilardi@unina.it}

\author[R.M. Mir\'o-Roig]{Rosa Maria Mir\'o-Roig}
\address{Mir\'o-Roig: Department de Mathem\`atiques i Inform\`atica\\
  Universitat de Barcelona\\
  08007 Barcelona, Spain}
\email{miro@ub.edu}

\author[H. Schenck]{Hal Schenck}
\address{Schenck: Mathematics Department \\
  Auburn University \\
   Auburn AL 36849 USA}
\email{hks0015@auburn.edu}

\author[J. Vall\`es]{Jean Vall\`es}
\address{Vall\`es: Department of Mathematics \\
  Universit\'e de Pau et des Pays de l'Adour \\
  64012 PAU Cedex - France}
\email{jean.valles@univ-pau.fr}

\subjclass[2000]{Primary 14H20; Secondary 14C21, 14J60} \keywords{free curve,  pencil of curves, arrangement, eigenschemes.}

\thanks{Mir\'o-Roig has been partially supported by the grant PID2020-113674GB-I00. Schenck was  supported by NSF 2006410.  Di Gennaro and Ilardi were partially supported by GNSAGA-INDAM}

\begin{abstract} Let $R=\kk[x,y,z]$. A reduced plane curve $C=V(f)\subset \PP^2$ is {\em free} if its associated module of tangent derivations $\mathrm{Der}(f)$ is a free $R$-module, or equivalently if the corresponding sheaf $T_ {\PP^2
}(-\log C)$ of vector fields tangent to $C$ splits as a direct sum of line bundles on $\p^2$. In general, free curves are difficult to find, and in this note, we describe a  new method for constructing free curves in $\PP^2$. The key tools in our approach are eigenschemes and pencils of curves, combined with an interpretation of Saito's criterion in this context. Previous constructions typically applied only to curves with quasihomogeneous singularities, which is not necessary in our approach. We illustrate our method by constructing large families of free curves. 
\end{abstract}
\maketitle
\section{Introduction}
In this paper, we study the module (or sheaf) of derivations tangent to a reduced curve $C \subseteq \p^2$. This is a classical topic of algebraic geometry, and fits into the following broader picture: let $D$ be a divisor on a smooth complex variety $X$. When $D$ is a
normal crossing divisor, Deligne \cite{d}  constructed a mixed Hodge
structure on $U = X \setminus D$ using the logarithmic de Rham complex
$\Omega_X^\bullet(-\mbox{log }D)$. Building on this, in \cite{S} 
Saito defined the sheaf of derivations tangent to $D$ and (dually) the sheaf of logarithmic 
one-forms with pole along $D$.
\begin{dfn}\label{D1}The module of tangent derivations is a sheaf of
$\mathcal{O}_X$--modules, such that if $f \in \mathcal{O}_{X,p}$ is a local
defining equation for $D$ at $p$, then 
\[
(\cT_X(-\log D))_p = \{ \theta \in \cT_X \, | \, \theta(f) \in \langle f \rangle \}.
\]
%Letting $\{x_1,\ldots,x_n\}$ be local coordinates at $p \in X$, then the
%Jacobian scheme is defined locally at $p$ by 
%$\{\partial f/\partial_{x_1}, \ldots, \partial f/\partial_{x_n}\}$.
\end{dfn}
%\noindent This yields a short exact sequence \cite{liao}
%\begin{equation}\label{SaitoSeq}
%0 \longrightarrow \cT_X(-\mbox{log }D) \longrightarrow \cT_X  
%\longrightarrow J_D(D) \longrightarrow 0.
%\end{equation}
\noindent When $D$ is a normal-crossing divisor, $\cT_X(-\log D)$ is always locally free. Saito shows that if $X$ has dimension $n$ then $\cT_X(-\log D)$ is a locally free sheaf if and only if locally there exist $n$ derivations
\begin{equation}\label{saitoCriterion}
\theta_i = \sum\limits_{j=1}^n f_{ij}\frac{\partial}{\partial x_j} \in \cT_X(-\log  D)_p
\end{equation}
such that the determinant of the matrix $[f_{ij}]$ of coefficients of the derivations $\{\theta_1, \ldots, \theta_n\}$ above is a unit multiple of the local defining equation for $D$.
 See \cite{gms}, \cite{liaoSchulze}, \cite{mondSchulze}, \cite{STY}, \cite{V} for recent work on $\cT_X(-\log D)$.  Even when the ambient space $X$ is a projective space, determining if the module of tangent derivations is locally free is non-trivial. 
 
The module of derivations tangent to $D$ is a reflexive sheaf. So, since a reflexive sheaf on a surface is always locally free, when $X=\p^2$ the module of derivations is locally free. Our interest is when it splits as $\cO_{\p^2}(a) \oplus \cO_{\p^2}(b)$, in which case the divisor (now a curve $C$) is said to be a {\em free} curve with exponents $(a,b)$. In general, free curves are difficult to find, and the point of the present work is to describe a new method to construct free curves based on the theory of eigenschemes of tensors. 

\subsection{Algebraic preliminaries}
Let $\kk$ be an algebraically closed field of characteristic $0$ and  $R=\oplus_{k\ge 0}R_k=\kk[x_0,\ldots ,x_n]$ be the graded ring in $n+1$ variables with $\PP^n=\Proj(R)$.
\begin{dfn}
For $R$ as above, the module of $\kk$-derivations $\mathrm{Der}_{\kk}(R)$ is free of rank $n+1$, with basis 
$\{ \partial_{x_0}, \ldots ,\partial_{x_n}\}$. For a reduced homogeneous polynomial $f\in R_{d \ge 1}$, the module of derivations 
$\mathrm{Der}(f)$ tangent to $V(f)$ is defined as
$$\mathrm{Der}(f):=\{\delta \in \mathrm{Der}_{\kk}(R) \mid \delta(f)\in \langle f \rangle\}.$$
The divisor $V(f)$ is {\em free} if $\mathrm{Der}(f)$ is a free $R$-module. The Euler derivation 
$$\delta_E=\sum_{i=0}^n x_i\frac{\partial}{\partial x_i}$$
satisfies $\delta_E(f)=d  f$. Hence, for any $\delta \in \mathrm{Der}(f)$ we have the decomposition
$$
\delta=\delta'+\frac{1}{d}\frac{\delta(f)}{f}\delta_E, \mbox{ with }\delta'=\delta - \frac{1}{d}\frac{\delta(f)}{f}\delta_E\mbox{ and }\delta'(f)=0,
$$
which yields the decomposition 
$$
\mathrm{Der}(f)=R\delta_E\oplus \mathrm{Der}_0(f), \mbox{ where } \mathrm{Der}_0(f)=\{\delta \in \mathrm{Der}_{\kk}(R) \mid \delta(f)=0\}.$$
\end{dfn}
\noindent Let $\nabla(f)=(\partial_{x_0} f, \ldots, \partial_{x_n} f)$ be the vector of partial derivatives. Then $\mathrm{Der}_0(f)$ is simply the kernel of the Jacobian map 
$$ \begin{CD} R^{n+1} @>\nabla(f)>> R(d-1).
\end{CD}$$
\begin{eg}\label{TeraoFree}The study of {\em hyperplane arrangements} focuses on the case where the divisor is a (reduced) union of hyperplanes $H_i$ in $\p^n$; by convention in this case the divisor is written as $\cA = \cup H_i$. In \cite{OS} Orlik-Solomon showed that the cohomology ring
$H^*(U_{\cA},\Q)$ of the affine arrangement complement $U_{\cA} \subseteq \C^{n+1}$ has
a purely combinatorial description, and a renowned theorem of Terao \cite{T} relates $H^*(U_{\cA},\Q)$ to the freeness of $V(f)$: 
\vskip .1in
\noindent {\bf Theorem (Terao)}: For a reduced hyperplane arrangement $\cA \subseteq \p^n$ with $\cA = V(f)$, if $Der(f) \simeq \oplus_{i=1}^{n+1} R(-a_i),$ then the Poincar\'e polynomial of $H^*(U_{\cA},\Q)$ satisfies
%\begin{equation}\label{TeraoFreeness}
 \[
 P(H^*(U_{\cA},\Q),t) = \prod\limits_{i=1}^{n+1} (1+a_it).
\]
%\end{equation}
\end{eg}
The condition that $D = V(f)$ is a free divisor on $\p^n$ is
equivalent to the Jacobian ideal $J_f$ of $f$ generated by $\nabla(f)$ being arithmetically Cohen-Macaulay
of codimension two. Such ideals are completely described by the
Hilbert-Burch theorem \cite{e}: if $I = \langle g_1,\ldots,
g_m\rangle$ is Cohen-Macaulay of codimension two, then $I$ is defined
by the maximal minors of the $m \times (m-1)$ matrix of the first
syzygies of the ideal $I$.

Combining this with Euler's formula for a homogeneous
polynomial shows that a free divisor $V(f)$ on $\p^n$ has a very
constrained structure: $f = \det(M)$ for an $(n+1) \times (n+1)$ matrix $M$, with one column consisting of the variables, and the
remaining $n$ columns the minimal first syzygies on $\nabla(f)$. In particular, $V(f)$ is a special type of determinantal hypersurface. As shown by Beauville \cite{B}, smooth determinantal hypersurfaces are quite rare. We focus here on singular curves in $\p^2$ which are reduced but not irreducible.

\subsection{History and Techniques for Free Divisors}
Following the early work of Deligne and Saito, the appearance of Terao's freeness theorem led to much subsequent work on freeness for hyperplane arrangements. In this setting, there is a natural inductive approach introduced by Terao in \cite{t2} to the study of freeness, involving the interplay between adding a hyperplane $H$ to $\cA$, and restricting $\cA$ to $H$. This inductive approach was generalized to the case of rational plane curve arrangements in \cite{cmh2}, and to arrangements of higher genus plane curves in \cite{STY}. 

The results in \cite{STY} and \cite{cmh2} require that the curve $C = \cup C_i$ has {\em quasihomogeneous} singularities, which means that at each singular point, the Milnor number and Tjurina number are equal. This is a subtle property which is nontrivial to verify. An important feature of our work in this paper is that no assumptions are made (or needed) about the type of singularity. Another technique for studying free curves in $\p^2$ appears in work \cite{V} of Vall\`es, and our approach builds on \cite{V}. First, we recall some terminology and background.
\begin{dfn}
 A linear system is a subspace $V \subseteq H^0(\cO_{\p^n}(d))=R_d$. If the dimension of $\p(V)$ is one or two, the corresponding linear systems are called {\em pencils} or {\em nets}.
 %, and we adopt this terminology.
 \end{dfn}

\begin{thm}\label{JeanTheorem} (Vall\`es,\cite{V})
Let $C = \cup C_i \subseteq \p^2$ be a union of curves $C_i$ from a pencil $P$ of curves of  degree $d$, such that $P$ has a smooth base locus and let $f$ be the corresponding reduced homogeneous polynomial. Then $C=V(f)$ is a free divisor with exponents $(2d-2, N-2d+1)$ where $N=\mathrm{deg}(C)$ if and only if $C$ contains all the singular members of the pencil and $\nabla(f)$ is a local complete intersection.
\end{thm}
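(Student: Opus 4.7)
The plan is to apply Saito's criterion \eqref{saitoCriterion} to a triple $\{\delta_E, \theta_P, \theta_2\}$, where $\theta_P$ is a canonical Jacobian derivation attached to the pencil $P$ and $\theta_2$ is a partner derivation supplied by Hilbert--Burch. Fix a basis $g, h \in R_d$ of $P$, and write each component of $C$ as $V(l_i)$ with $l_i = \alpha_i g + \beta_i h$, so that $f = \prod_{i=1}^{k} l_i$ up to scalar and $N = kd$. Set
\[
\theta_P \;=\; M_0\, \partial_{x_0} - M_1\, \partial_{x_1} + M_2\, \partial_{x_2},
\]
where $M_i$ is the $2\times 2$ minor obtained by deleting the $i$-th column of the Jacobian matrix of $(g, h)$. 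A Laplace expansion with a repeated row gives $\theta_P(g) = \theta_P(h) = 0$, so $\theta_P(l) = 0$ for every $l \in \langle g, h \rangle$; in particular $\theta_P \in \mathrm{Der}_0(f)$ has coefficients of degree $2(d-1) = 2d-2$. This $\theta_P$ is the natural candidate for a minimal generator of $\mathrm{Der}_0(f)$ in both directions of the equivalence.

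For the $(\Leftarrow)$ direction, assume every singular member of $P$ is a component of $C$ and that $J_f$ is a local complete intersection. Since $J_f$ is unmixed of codimension at most $2$, the lci hypothesis upgrades to Cohen--Macaulayness, and Hilbert--Burch makes $\mathrm{Der}_0(f)$ free of rank $2$ with exponents $(e_1, e_2)$ satisfying $e_1 + e_2 = N - 1$. The presence of $\theta_P$ gives $e_1 \le 2d - 2$. The key step, and what I expect to be the main obstacle, is the matching lower bound $e_1 \ge 2d - 2$: any $\delta \in \mathrm{Der}_0(f)$ must, on the complement of the smooth base locus $Z$, be tangent to every smooth fiber of the rational map $\phi = [g:h]$ that lies in $C$. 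A rank analysis of $d\phi$ off $Z$ forces such a $\delta$ to be a polynomial multiple of $\theta_P$ once its coefficient degree drops below $2d - 2$; the hypothesis that $C$ contains every singular member of $P$ is precisely what allows this tangency argument to propagate across the ramification locus of $\phi$ without a degree drop. Consequently $(e_1, e_2) = (2d - 2, N - 2d + 1)$, and Saito's criterion applied to $\{\delta_E, \theta_P, \theta_2\}$ confirms freeness with the stated exponents.

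For the $(\Rightarrow)$ direction, assume $C$ is free with exponents $(2d - 2, N - 2d + 1)$. Hilbert--Burch gives that $J_f$ is Cohen--Macaulay of codimension $2$, and its first-syzygy module is generated by $\theta_P$ together with a partner $\theta_2$ of coefficient degree $N - 2d + 1$; the $2\times 2$ minors of the corresponding syzygy matrix recover $f_{x_0}, f_{x_1}, f_{x_2}$ up to signs. A local computation at each $p \in \mathrm{Sing}(C)$ shows that these minors present $J_{f, p}$ as a $2$-generated ideal, so $J_f$ is lci. For the inclusion of singular members, suppose for contradiction that a singular member $V(l_0)$ of $P$ is not a component of $C$, and let $p$ be a singular point of $V(l_0)$. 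Then $p$ is a ramification point of $\phi$, so $\theta_P$ vanishes at $p$. For Saito's determinant $\det[\delta_E, \theta_P, \theta_2]$ to equal a nonzero scalar multiple of $f$, the derivation $\theta_2$ would have to compensate with enough vanishing at each such $p$; but a degree count of $\theta_2$ against the total contribution from all excluded singular members forces $l_0$ to divide $f$, contradicting $V(l_0) \not\subseteq C$.
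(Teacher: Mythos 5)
You should first note that the paper does not prove Theorem~\ref{JeanTheorem} at all: it is quoted from \cite{V}, and the machinery developed afterwards (Lemma~\ref{deriv-can}, Proposition~\ref{key}, Theorem~\ref{main-thm}) is designed precisely to replace its two hypotheses by the single condition that $F$ contain the eigenscheme of $\delta_{f,g}$. Your starting point is the right one — your $\theta_P$ is exactly the canonical derivation $\delta_{f,g}$ of Lemma~\ref{deriv-can}, and it does give $e_1\le 2d-2$ — but both directions of your argument rest on inferences that are false in general. In the $(\Leftarrow)$ direction the decisive sentence is ``the lci hypothesis upgrades to Cohen--Macaulayness.'' Freeness of $V(f)$ is equivalent to the \emph{graded} ideal $J_f$ being saturated (equivalently, $R/J_f$ Cohen--Macaulay, so that Hilbert--Burch applies), whereas being a local complete intersection is a condition on the stalks $J_{f,p}$ and does not control saturation: a generic arrangement of four lines has only nodes, hence a Jacobian ideal that is lci at every singular point, yet it is not free. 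That implication is the entire content of the theorem; establishing it requires a genuine numerical step (comparing the total Tjurina number, i.e.\ $c_2$ of the logarithmic bundle, with the length of the degeneracy scheme of the pair $(\delta_E,\delta_{f,g})$), and it is there — not merely in pinning down $e_1$ — that the hypothesis that $C$ contains every singular member is actually consumed. Your proposal never performs this count.

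The $(\Rightarrow)$ direction has the symmetric defect. A codimension-two Cohen--Macaulay ideal with a $3\times 2$ Hilbert--Burch matrix need not be $2$-generated locally (the ideal $(x,y)^2$ is the standard example), so ``these minors present $J_{f,p}$ as a $2$-generated ideal'' is an assertion, not a computation. Indeed, Example~\ref{conics} of this very paper produces free curves whose Jacobian ideal fails to be a local complete intersection at the tangency point ($\tau_p=15\neq\mu_p=16$), so freeness alone cannot imply lci; any correct proof must invoke the smoothness of the base locus, which your local step never does. Similarly, the concluding ``degree count'' that is supposed to force an omitted singular member $V(l_0)$ to divide $f$ is only sketched, and the lower bound $e_1\ge 2d-2$ via a ``rank analysis of $d\phi$'' is asserted rather than proved (it genuinely needs $C$ to contain enough members for tangency to the three fibres to propagate to the whole pencil). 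In short: the canonical derivation is the correct first move, but the three substantive implications are each replaced by a claim that fails outside the specific hypotheses of the theorem. You may find it instructive to compare with Theorem~\ref{main-thm}, where the same conclusion is obtained from the single, checkable condition $F\in (I_\Gamma)_N$.
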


One of the key tools in \cite{V} is the construction of a canonical syzygy on $\nabla(f)$, with $f$ as above. Let $G_1$ and $G_2$ be two general elements of the pencil. We show in Lemma \ref{deriv-can} that the $2\times 2$ minors $\nabla(G_1)\wedge \nabla (G_2)$ of the matrix 
\[
\left[ \!  
\begin{array}{ccc}
\frac{\partial G_1}{\partial x_1} & \frac{\partial G_1}{\partial x_2} &\frac{\partial G_1}{\partial x_3}\\
\frac{\partial G_2}{\partial x_1} & \frac{\partial G_2}{\partial x_2} &\frac{\partial G_2}{\partial x_3}
\end{array}\! \right]
\]
are a syzygy on $\nabla(f)$ .
We can use this method to study more general pencils, but Theorem \ref{JeanTheorem} does not apply when the base locus is not smooth. Even for a pencil of smooth conics sharing a tangent line the Jacobian ideal may not be a local complete intersection: we show in Example \ref{conics} that the Tjurina and Milnor numbers can differ at a singular point. The total Tjurina number (sum of the Tjurina numbers at singular points) determines the second Chern class of the module of tangent derivations, % and can be computed easily, but the Milnor number 
and is quite subtle. We discuss this more in \S \ref{conics}. 

Another interesting case which is not covered by Theorem \ref{JeanTheorem} is when the pencil is generated by two multiple curves, such as the pencil $(f^3,g^2)$ where $V(f)$ is a smooth conic and $V(g)$ a smooth cubic. All curves of the pencil are singular along the base locus $V(f)\cap V(g).$ This example was introduced by Zariski in \cite{Za1} and \cite{Za2}:  he constructed  two sextic curves $C_1$ and $C_2$, each with six ordinary cusps, such that the complements $\p^2 \setminus C_1$ and $\p^2 \setminus C_2$ are not homeomorphic. The difference between the two is that $C_1$ has all cusps on a smooth conic, and $C_2$ does not. As a consequence, the fundamental groups of the complements are different. In Example \ref{sextic-pencil}, 
we describe in detail the case of a triangle $V(g)$ meeting a smooth conic $V(f)$ along six distinct points; when $V(g)$ is smooth an analysis appears in \cite{Va-Ng}. For another perspective on freeness of curves in a pencil, see Dimca \cite{di}, and for a close variant to freeness, see Abe \cite{abe}.
\vskip .05in
\noindent{\bf Acknowledgements.} 
Our project began at the 2019 CIRM workshop ``Lefschetz properties in algebra, geometry, and combinatorics''
%organized by A. Dimca, R. Mir\'o-Roig, and J. Vall\`es, 
and we thank CIRM for a great workshop.  
\vskip -.01in
 \section{First Key Tool: Eigenschemes}
\noindent We begin with the definition of an eigenscheme in our context. 
 \begin{dfn}
  The eigenscheme associated to 
 three homogeneous polynomials $(P_1,P_2,P_3)$ of the same degree $n\ge 1$ 
 is the closed subscheme  $\Gamma \subset \p^2$ defined by the $2\times 2$ minors of the matrix 
 $$M=\begin{pmatrix}   x & P_1  \\
y  &P_2 \\
  z & P_3
   \end{pmatrix}.$$
\end{dfn} 
 
  When $V(P_1,P_2,P_3)$ contains a curve this curve is also in the eigenscheme; note that even if $V(P_1,P_2,P_3)$ is a finite set of points 
or empty, the eigenscheme may be of codimension one: for example given $f, g,Q_1,Q_2,Q_3$ homogeneous forms such that \[
P_1=xf+gQ_1, P_2=yf+gQ_2, P_3=zf+gQ_3,
\] 
then the eigenscheme of $(P_1,P_2,P_3)$ clearly contains $V(g)$.

\par Assume now that the eigenscheme $\Gamma$ associated to $(P_1,P_2,P_3)$ is a finite scheme. Then it is defined by
$$\begin{CD}
    0@>>> \cO_{\p^2}(-1)\oplus \cO_{\p^2}(-n)  @>M>> 
      \cO_{\p^2}^3 @>>>  \mathcal{J}_{\Gamma}(n+1) @>>>0,
    \end{CD}$$ 
    where $\mathcal{J}_{\Gamma}$ is its ideal sheaf.  Its  length is 
  $$c_2(\mathcal{J}_{\Gamma}(n+1))=1+n+n^2.$$
   Set theoretically, one sees easily that $\Gamma$ consists of the union of the indeterminacy locus and the fixed points of the rational map
   $$\p^2 ---\rightarrow \p^2, \,\, p\mapsto (P_1(p),P_2(p),P_3(p)).$$
  Moreover, it is locally a complete intersection. Indeed, it is the zero locus of a suitable section of a rank two vector bundle on $\PP^2$; namely of a twist of the tangent bundle $T_{\PP^2}$ of $\PP^2$ as it  can be seen using the following commutative diagram
 
   $$\begin{CD}
     @. 0 @. 0  @.\\
     @. @VVV @VVV @.\\
   0@>>> \cO_{\p^2}(-1) @= \cO_{\p^2}(-1) @. \\
   @. @VVV @VVV @.\\
     0@>>> \cO_{\p^2}(-1)\oplus \cO_{\p^2}(-n)  @>M>> 
      \cO_{\p^2}^3 @>>>  \mathcal{J}_{\Gamma}(n+1) @>>>0\\
      @. @VVV @VVV @| \\
     0 @>>>  \cO_{\p^2}(-n)@>>> T_{\p^2}(-1) @>>>  \mathcal{J}_{\Gamma}(n+1) @>>>0 \\
      @. @VVV @VVV @.\\
       @. 0 @. 0  @.
    \end{CD}$$ 
\noindent For more details about  eigenschemes we refer to \cite{ASS} and \cite{BGV}.
 \newpage
\begin{lem} \label{contain-gamma1} Assume that $\Gamma$ is a finite scheme. Let $f\in R_N$ with 
$N\ge n+1$. Then  
$f\in \HH^0(\mathcal{J}_{\Gamma}(N))$ if and only if there exists \[
(Q_1,Q_2,Q_3)\in R_{N-(n+1)}^3\] such that 
$$\mathrm{det} 
\begin{pmatrix}  x & P_1  &Q_1  \\
y & P_2  &Q_2 \\
  z & P_3  &Q_3
   \end{pmatrix}= cf, \,\, c\in \kk^*$$
\end{lem}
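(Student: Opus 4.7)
The plan is to recognise the determinant in the statement as the image of $(Q_1,Q_2,Q_3)$ under the surjection $\cO_{\p^2}^{\,3}\twoheadrightarrow \mathcal{J}_\Gamma(n+1)$ appearing in the Hilbert--Burch resolution displayed just before the lemma. Let $m_{(i)}$ denote the $2\times 2$ minor of $M$ obtained by deleting row $i$, so that
\[
m_{(1)}=yP_3-zP_2,\quad m_{(2)}=xP_3-zP_1,\quad m_{(3)}=xP_2-yP_1.
\]
A direct check shows that $(x,y,z)$ and $(P_1,P_2,P_3)$ are syzygies on the triple $(m_{(1)},-m_{(2)},m_{(3)})$, so by Hilbert--Burch the surjection sends the standard basis vector $e_i$ of $\cO_{\p^2}^{\,3}$ to $(-1)^{i+1}m_{(i)}$. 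Laplace expansion along the third column then yields
\[
\det\begin{pmatrix} x & P_1 & Q_1 \\ y & P_2 & Q_2 \\ z & P_3 & Q_3 \end{pmatrix}
= Q_1 m_{(1)} - Q_2 m_{(2)} + Q_3 m_{(3)},
\]
which is precisely the image of $(Q_1,Q_2,Q_3)$ under the resolution map.

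The $(\Leftarrow)$ direction is then immediate: if the above determinant equals $cf$ with $c\in\kk^*$, then $cf$, and hence $f$, lies in the ideal generated by the $m_{(i)}$, so $f\in H^0(\mathcal{J}_\Gamma(N))$. For $(\Rightarrow)$ I would twist the displayed resolution by $N-n-1$ and pass to global sections. Since $H^1(\p^2,\cO_{\p^2}(k))=0$ for every integer $k$, the induced map
\[
H^0(\cO_{\p^2}(N-n-1))^{\oplus 3}\longrightarrow H^0(\mathcal{J}_\Gamma(N))
\]
is surjective, producing homogeneous $Q_1,Q_2,Q_3$ of degree $N-(n+1)$ whose image is $f$. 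The Laplace identity above then supplies the required equality of determinants, even with $c=1$.

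The argument is essentially symbolic and I do not anticipate any serious obstacle. The only subtlety is sign bookkeeping: the Hilbert--Burch signs $(-1)^{i+1}$ must be matched correctly against the cofactor signs from the Laplace expansion, but once this is in place the $Q_i$ can be read off directly as the entries of the third column of the $3\times 3$ matrix. The hypothesis $N\ge n+1$ is used only to guarantee that $R_{N-(n+1)}$ is non-zero, so that the $Q_i$ exist as honest polynomials.
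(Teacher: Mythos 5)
Your proof is correct and follows essentially the same route as the paper's: both identify elements of $H^0(\mathcal{J}_\Gamma(N))$ with combinations $Q_1R_1+Q_2R_2+Q_3R_3$ of the $2\times 2$ minors via surjectivity of $R_{N-(n+1)}^3\to (I_\Gamma)_N$ (which the paper attributes to saturation of $I_\Gamma$ and you derive from the twisted resolution and $H^1$-vanishing of line bundles on $\p^2$ --- the same fact), and then conclude by Laplace expansion along the third column. The sign bookkeeping you flag is handled correctly.
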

\begin{proof} Let us denote by $I_{\Gamma}$  the saturated ideal 
$\oplus_{j} \HH^0(\mathcal{J}_{\Gamma}(j))$.
 Since $R_{N-(n+1)}^3 \rightarrow( I_{\Gamma})_{N}$ is surjective (because $I_{\Gamma}$ is saturated), a curve $V(f)\subset \PP^2$ of degree $N$ containing $\Gamma$ has an equation of the type 
\[
f=Q_1\m_1+Q_2\m_2+Q_3\m_3=0,
\] where $\bigwedge^2 M=(\m_1,\m_2,\m_3)$. This is clearly equivalent to 
$$f=\mathrm{det} 
\begin{pmatrix}  x & P_1  &Q_1  \\
y & P_2  &Q_2 \\
  z & P_3  &Q_3
   \end{pmatrix}= Q_1\m_1+Q_2\m_2+Q_3\m_3.$$
   The converse is immediate.
\end{proof}
%\begin{proof}   
%Such a curve induces a injective map $R[n+1-N] \longrightarrow J_{\Gamma}(n+1)$, 
%the cokernel being a torsion module defining $\Gamma$ in $V(F)$ and supported exactly %on $V(F)$. This map factorizes through $R^3$ giving three polynomials 
%$(Q_1,Q_2,Q_3)\in R_{N-(n+1)}^3$ such that 
%$$Q_1R_1+Q_2R_2+Q_3R_3=cF$$
%where $c\in R_0$ and $\bigwedge^2 M=(R_1,R_2,R_3)$. This proves 
%$$\mathrm{det} 
%\begin{pmatrix}  x & P_1  &Q_1  \\
%y & P_2  &Q_2 \\
%  z & P_3g  &Q_3
%   \end{pmatrix}= Q_1R_1+Q_2R_2+Q_3R_3=cF, \,\, c\in R_0.$$
%\end{proof} 
Now let us show how this eigenscheme is related to the notion of freeness for curves.  Recall that we say a reduced polynomial $f$ (or the reduced curve $V(f)\subset \PP^2$) is free if and only if $\mathrm{Der}_0(f)$ (or equivalently $\mathrm{Der}(f)$) is a free $R$-module. So $f$ is free if and only if $\mathrm{Der}_0(f)=R(-a)\oplus R(-b)$ with $0\le a\le b$  and $a+b+1=\mathrm{deg}(f)$, or equivalently 
$\mathrm{Der}(f)=R(-1)\oplus R(-a)\oplus R(-b)$. In this situation, we will use the terminology ``$f$ is free with exponents $(a,b)$''.

\smallskip

Let $C=V(f)$ be a reduced plane curve of degree $d$. 
Let us recall that, according to Saito's criterion \cite{S}, the curve $C$ is free with exponents $(a,b)$
if and only if there exist two derivations 
$$\delta= 
P_1\partial_x+P_2\partial_y+P_3\partial_z \,\, \mathrm{and} \,\, \mu=Q_1\partial_x+Q_2\partial_y+Q_3\partial_z$$ of degree $a$ and $b$ belonging to $\mathrm{Der}(f)$ such that $$
\mathrm{det}\begin{pmatrix}  x&P_1&Q_1   \\
 y&P_2&Q_2  \\
 z&P_3&Q_3
   \end{pmatrix}=cf, $$ where $c\in \kk^*$.
   
\smallskip

Let us introduce now the kernel of a derivation.
\begin{dfn}
Let 
\[\delta=P_1\partial_x+P_2\partial_y+P_3\partial_z
\] be a non zero irreducible derivation of degree $a\ge 1$. The graded module of homogeneous polynomials with $\delta$ as a tangent derivation: $$K(\delta)=\oplus_{d\ge 0} K(\delta)_d :=\{f\in R\,|\, \delta (f)\in (f)\}$$ is called  \underline{kernel} of the derivation $\delta$.
\end{dfn}
\begin{rmk}
$F\in K(\delta) \Leftrightarrow \delta \in \mathrm{Der}(F).$
\end{rmk}
As an immediate consequence of Lemma \ref{contain-gamma1} we obtain:\begin{thm} \label{de-de-new}
 Let $\delta=P_1\partial_x+P_2\partial_y+P_3\partial_z$ be a non zero irreducible derivation of degree $a\ge 1$ such that its eigenscheme $\Gamma_{\delta}$ is a finite scheme.
% and $I_{\Gamma_{\delta}}$ be its saturated ideal.
Let $f$ be a reduced homogeneous polynomial such that  $ f\in K(\delta)_{d\ge a+1}$, then
  the plane curve
  $V(f)$ is free with exponents $(a,d-a-1)$ if and only if $f\in I_{\Gamma_{\delta}}.$
 \end{thm}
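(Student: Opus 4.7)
The plan is to establish the equivalence by combining Saito's criterion (recalled just above) with Lemma~\ref{contain-gamma1}. Both directions reduce to the existence of one and the same determinantal identity
\[
\det\begin{pmatrix} x & P_1 & Q_1 \\ y & P_2 & Q_2 \\ z & P_3 & Q_3 \end{pmatrix}\;=\;c\,f,\qquad c\in\kk^*,\ \deg Q_i = d-a-1;
\]
Lemma~\ref{contain-gamma1} identifies this identity with $V(f)\supset\Gamma_\delta$, while Saito's criterion identifies it with freeness of $V(f)$ with exponents $(a,d-a-1)$, provided the third column is a derivation tangent to $V(f)$. So the job is to bridge the two.

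For the reverse direction, assume $V(f)\supset\Gamma_\delta$. Lemma~\ref{contain-gamma1} produces $Q_1,Q_2,Q_3\in R_{d-a-1}$ realising the above identity. Set $\mu = Q_1\partial_x+Q_2\partial_y+Q_3\partial_z$; once we show $\mu\in\mathrm{Der}(f)$, Saito's criterion applied to $(\delta_E,\delta,\mu)$ delivers the desired freeness. To verify $\mu(f)\in(f)$ I argue pointwise: at any smooth point $[p]\in V(f)\setminus\Gamma_\delta$, Euler's identity shows $(x,y,z)|_p\in\Ker\nabla f(p)$, and $\delta\in\mathrm{Der}(f)$ (giving $\delta(f)=r f$ for some $r\in R_{a-1}$) shows $(P_1,P_2,P_3)|_p\in\Ker\nabla f(p)$. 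By the very definition of $\Gamma_\delta$ these two vectors are linearly independent at $[p]$, hence span the two-dimensional space $\Ker\nabla f(p)$. Vanishing of the determinant at $p$ then forces $(Q_1,Q_2,Q_3)|_p$ into their span, so $\mu(f)(p)=0$. Since this vanishing holds on a dense open subset of the reduced curve $V(f)$, we conclude $f\mid\mu(f)$.

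For the forward direction, assume $V(f)$ is free with exponents $(a,d-a-1)$, so $\mathrm{Der}(f) = R\delta_E\oplus R\delta_1\oplus R\delta_2$ with $\deg\delta_1=a$ and $\deg\delta_2=d-a-1$, and Saito supplies $\det(E\mid\delta_1\mid\delta_2)=c'f$ with $c'\in\kk^*$. Expanding the given derivation $\delta$ of degree $a$ in this basis yields $\delta=h\delta_E+\beta\delta_1+\gamma\delta_2$ with $h\in R_{a-1}$, $\beta\in\kk$, and $\gamma\in R_{2a-d+1}$ (so $\gamma=0$ unless $a\geq d-a-1$). Finiteness of $\Gamma_\delta$ rules out the degenerate possibilities: if $\beta=0$ and $\gamma$ has positive degree then every $2\times 2$ minor of $(E\mid P)$ factors through $\gamma$, whence $V(\gamma)\subset\Gamma_\delta$ is a curve; if $\beta=\gamma=0$ every such minor vanishes identically and $\Gamma_\delta=\PP^2$. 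Thus, after possibly relabeling $\delta_1\leftrightarrow\delta_2$ in the balanced case $a=d-a-1$, we may assume $\beta\ne 0$. Column operations (subtract $h$ times the first column and $\gamma$ times the third column from the second column) then give $\det(E\mid\delta\mid\delta_2)=\beta c'f$, a non-zero multiple of $f$, and Lemma~\ref{contain-gamma1} yields $V(f)\supset\Gamma_\delta$.

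The main obstacle is the forward direction: since $\delta$ is given only as an element of $\mathrm{Der}(f)_a$ and not as a minimal generator, one must invoke the finiteness hypothesis on $\Gamma_\delta$ to exclude the degenerate expansions of $\delta$ that would obstruct replacing a minimal degree-$a$ generator by $\delta$ inside Saito's determinant via elementary column operations. The reverse direction is essentially geometric, and its one subtle point is the passage from vanishing of $\mu(f)$ on the dense open subset $V(f)\setminus(\Gamma_\delta\cup\mathrm{Sing}(V(f)))$ to divisibility $f\mid\mu(f)$, which relies on reducedness of $V(f)$.
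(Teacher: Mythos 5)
Your proof is correct, and its harder half takes a genuinely different route from the paper's. For the forward direction the paper simply asserts that $\delta$ ``can be chosen as a generator'' of $\mathrm{Der}(f)$ and applies Saito's criterion; your expansion $\delta=h\delta_E+\beta\delta_1+\gamma\delta_2$ together with the observation that $\beta=0$ would force a curve $V(\gamma)$ (or all of $\PP^2$) inside $\Gamma_\delta$ is exactly the justification the paper leaves implicit, so here you are doing the same thing more carefully. The real divergence is in the reverse direction, where both arguments must show that the column $(Q_1,Q_2,Q_3)$ produced by Lemma~\ref{contain-gamma1} is actually a derivation tangent to $V(f)$. The paper argues algebraically: from $M\,CoM^{T}=fI$ it extracts the identities $m_{Q_i}\,\mu(f)=f(\cdots)$, and if $f\nmid\mu(f)$ an irreducible factor of $f$ would divide all three cofactors $m_{Q_i}$, i.e.\ all generators of $I_{\Gamma_\delta}$, contradicting finiteness of $\Gamma_\delta$. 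You argue geometrically: at a smooth point $p$ of $V(f)$ off $\Gamma_\delta$ the columns $(x,y,z)|_p$ and $(P_i)|_p$ span the two-dimensional $\Ker\nabla f(p)$, so vanishing of the determinant forces $\mu(f)(p)=0$, and density plus reducedness gives $f\mid\mu(f)$. Both are valid; the paper's cofactor argument is purely algebraic (it needs only Gauss's lemma and works verbatim over any field), while yours is more transparent about \emph{why} finiteness of $\Gamma_\delta$ matters but quietly uses that $\kk$ is algebraically closed (or at least that closed points of the dense open subset detect vanishing, and that $(f)$ is radical) in the final Nullstellensatz step. Either version is acceptable; if you want your argument to be field-independent, replace the pointwise evaluation by the same linear-algebra computation over the function field of each irreducible component of $V(f)$.
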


 \begin{proof}
 Let us assume first that $V(f)$ is free with exponents $(a, d-a-1)$, so that $\delta$ can be chosen as a generator of  $\mathrm{Der}(f)$. Let us denote by $\mu=Q_1\partial_x+Q_2\partial_y+Q_3\partial_z$ a generator of degree $d-1-a$. Then by Saito's criterion we have 
  $$
\mathrm{det}\begin{pmatrix}  x&P_1&Q_1   \\
 y&P_2&Q_2  \\
 z&P_3&Q_3
   \end{pmatrix}=cf, \quad c\in \kk^*, $$  which proves directly that $f\in I_{\Gamma_{\delta}}.$
   
 \smallskip
   
 Reciprocally, assume that $f\in I_{\Gamma_{\delta}}.$ Then by Lemma \ref{contain-gamma1} there exist three polynomials 
 $(Q_1,Q_2,Q_3)\in R_{d-a-1}^3$
  such that 
     $$\mathrm{det} 
\begin{pmatrix}  x & P_1  &Q_1  \\
y & P_2  &Q_2 \\
  z & P_3  &Q_3
   \end{pmatrix}= f.$$ To conclude with Saito's criterion  it just remains to verify that the corresponding derivation 
$\mu=Q_1\partial_x+Q_2\partial_y+Q_3\partial_z$ verifies   
   $\mu \in \mathrm{Der}(f)$.
   
\smallskip

Let  $M=\begin{pmatrix}  x & P_1  &Q_1  \\
y & P_2  &Q_2 \\
  z & P_3  &Q_3
   \end{pmatrix}$ and $Co(M)^{T}$ be the transpose matrix of its cofactors.
   Let us denote by $\m_x,\m_y,\m_z, \m_{P_i}$ and $\m_{Q_i}$ the cofactors of $x,y,z, P_i$ and $Q_i$ respectively. 
   Recall that $$ M\, Co(M)^{T}=Co(M)\, M^{T}=f I.$$
   Multiplying by $\nabla(f)$, as $x\partial_xf+y\partial_y f+z\partial_zf=df$ and $\delta(f)=P_1f_x+P_2f_y+P_3f_z=Kf$ for some homogeneous polynomial $K $ we get  
   $$Co(M)\, M^{T}\, \nabla(f)=Co(M)\, \begin{pmatrix}  df   \\
 Kf  \\
 \mu(f)
   \end{pmatrix}=f \nabla(f).$$
   This gives the following system of equations: 
   $$
   \left\{
   \begin{array}{ccc}
    \m_{Q_1} \,\mu(f)&=&f (\partial_xf-d\m_x-K\m_{P_1})\\
                 \m_{Q_2}\, \mu(f)&=&f (\partial_yf-d\m_y-K\m_{P_2})\\
                 \m_{Q_3}\, \mu(f)&=&f (\partial_zf-d\m_z-K\m_{P_3})
                 \end{array}
                 \right.
                $$
  If $\mu \notin \mathrm{Der}(f)$ then $f$ does not divide $\mu(f)$ meaning that there is a irreducible factor $g$ of $f$ which is not an irreducible factor of $\mu(f)$. By the Gauss lemma, this factor $g$ divides $\m_{Q_i}$ for $i=1,2,3$. But the $\m_{Q_i}$ are the three generators of the ideal $I_{\Gamma_{\delta}}$ which contradicts the finiteness of  $\Gamma_{\delta}$.             
                Then $\mu \in \mathrm{Der}(f)$.                 \end{proof}
\begin{rmk}%\begin{enumerate}
Note that fixing the exponents is necessary for Theorem~\ref{de-de-new} to hold. To see this, consider a free curve of degree $5$ with exponents $(2,2)$. Let $\mu$ and $\nu$ two derivations of degree $2$ generating its logarithmic module. Let $\delta=x\mu+y\nu$ a derivation of degree $3$. Then $f\in K(\delta)_5$ is free but its exponents are not $(3,1)$ and as a consequence $f\notin I_{\Gamma_{\delta}}$. 

In addition, if $f\in K(\delta)_{d}$ with $d\le a$ then $f$ can be free but $\delta$ will not be a generator of its associated logarithmic module, for degree reasons.
%\end{enumerate}
\end{rmk}

\section{Second Key Tool: Pencils of Curves}
As  well known examples of free arrangements we have:
\begin{enumerate}
\item The Ceva-Braid arrangement 
\[xyz(x-y)(x-z)(y-z)=0\] is free with exponents $(2,3)$.
\item The Hesse arrangement \[\displaystyle \prod_{\epsilon =\infty, 1,j,j^2}(x^3+y^3+z^3-3\epsilon xyz)=0\] is free with exponents  $(4,7)$.
\item The Fermat arrangement \[(x^n-y^n)(x^n-z^n)(y^n-z^n)=0\] is free with exponents $(n+1,2n-2)$.
\end{enumerate}
A main observation pointed out to the fourth author by Artal and Cogolludo is that each of the three divisors above is the union of all the singular members of the pencil of 
\begin{enumerate}
\item conics $[(x-y)z,y(x-z)]$ for the Ceva-Braid arrangement.
\item cubics $[x^3+y^3+z^3, xyz]$ for the Hesse arrangement.
\item $n$-ics  $[x^n-y^n,x^n-z^n]$ for the Fermat arrangement.
\end{enumerate}
\begin{rmk}
Artal and Cogolludo suggested  that this phenomenon should hold for any pencil. 
When the base locus of the pencil is smooth and the singular curves of the pencil have only quasihomogeneous singularities then this is indeed true, and is proved in \cite{V}. The proof relies on the existence of a canonical derivation
$\delta_{f,g}$ associated to a pencil $[f,g]$.
The obstruction to proving the result with non-smooth base locus is the lack of control over the nature and numerical contributions of the singular points. 
\vskip .1in

\noindent The main idea in this paper is that rather than considering the base locus of the pencil and the singular points  of the singular curves of the canonical derivation $\delta_{f,g}$ as in \cite{V}, we consider the eigenscheme of $\delta_{f,g}$.
\end{rmk}
\subsection{Canonical derivation associated to a pencil}

We now consider two reduced polynomials $f\in R_n$ and $g\in R_m$ with no common factor. Define a derivation as follows:
$$\delta_{f,g}:=[\nabla f\wedge \nabla g].\nabla=\mathrm{det} 
\begin{pmatrix}   \partial_x f& \partial_x g  &\partial_x  \\
\partial_y f& \partial_y g  &\partial_y \\
  \partial_z f& \partial_z g  &\partial_z
   \end{pmatrix}.$$
Then, we have 
\begin{lem} \label{deriv-can} Let $F_k=0$ be the union of $k\ge 1$ curves in the pencil generated 
by $(f^a,g^b)$ where $\mathrm{lcm}(n,m)=a\times n=b\times m$.
The derivation $\delta_{f,g}$ associated to the pair $(f,g)$,  verifies:
\begin{enumerate}
\item $ \delta_{f,g}\in \mathrm{Der}_0(f)\cap \mathrm{Der}_0(g)$,
\item $\delta_{f,g}\in \mathrm{Der}_0(F_k)$,
\item If $F_k=FG$ where $F$ and $G$ are two polynomials with no common factor then $\delta_{f,g}\in \mathrm{Der}(F)\cap \mathrm{Der}(G)$.
\end{enumerate}
\end{lem}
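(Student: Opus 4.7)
The plan is to exploit the determinantal shape of $\delta_{f,g}$: for any polynomial $h$, one has
\[
\delta_{f,g}(h)=\det
\begin{pmatrix}
\partial_x f & \partial_x g & \partial_x h\\
\partial_y f & \partial_y g & \partial_y h\\
\partial_z f & \partial_z g & \partial_z h
\end{pmatrix},
\]
the $3\times 3$ determinant of the matrix whose third column is $\nabla h$. From this description, (1) is immediate: substituting $h=f$ or $h=g$ yields a matrix with two equal columns, so the determinant vanishes, giving $\delta_{f,g}(f)=\delta_{f,g}(g)=0$.

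For (2), I would use that $\delta_{f,g}$ is a $\kk$-linear derivation and therefore obeys the Leibniz rule (and hence $\delta(f^a)=af^{a-1}\delta(f)$). The degree-compatibility hypothesis $an=bm=\mathrm{lcm}(n,m)$ is precisely what guarantees that $[f^a,g^b]$ is a genuine linear pencil, with every member of the form $F_i=\alpha_i f^a+\beta_i g^b$ for some $(\alpha_i:\beta_i)\in\PP^1$. Combining Leibniz with (1) gives
\[
\delta_{f,g}(F_i)=a\alpha_i f^{a-1}\delta_{f,g}(f)+b\beta_i g^{b-1}\delta_{f,g}(g)=0.
\]
A further iteration of Leibniz applied to $F_k=\prod_{i=1}^{k}F_i$ then yields $\delta_{f,g}(F_k)=0$.

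For (3), I would apply the Leibniz rule directly to the factorization $F_k=FG$, using the conclusion of (2):
\[
0=\delta_{f,g}(FG)=F\,\delta_{f,g}(G)+G\,\delta_{f,g}(F),
\]
so $F\,\delta_{f,g}(G)=-G\,\delta_{f,g}(F)$. Since $R$ is a UFD and $\gcd(F,G)=1$, Gauss's lemma forces $F\mid\delta_{f,g}(F)$ and $G\mid\delta_{f,g}(G)$, which is exactly the assertion $\delta_{f,g}\in \mathrm{Der}(F)\cap \mathrm{Der}(F)$, or more precisely $\delta_{f,g}\in \mathrm{Der}(F)\cap \mathrm{Der}(G)$.

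The computations reduce to one-line applications of the Leibniz rule and Gauss's lemma, so I do not anticipate a genuine obstacle. The only delicate bookkeeping is at step (2): one must make sure the hypothesis $an=bm$ is invoked exactly to guarantee that the elements of the pencil $[f^a,g^b]$ are homogeneous of a common degree, so that a generic member really does have the clean form $\alpha f^a+\beta g^b$ on which the chain-rule computation relies.
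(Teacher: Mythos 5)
Your proposal is correct and follows essentially the same route as the paper: part (1) from the determinantal definition (two equal columns), part (2) from the Leibniz rules for products and powers applied to members $\alpha f^a+\beta g^b$ of the pencil, and part (3) from $G\,\delta_{f,g}(F)=-F\,\delta_{f,g}(G)$ together with coprimality of $F$ and $G$. Your write-up is in fact slightly more detailed than the paper's (which leaves the Leibniz computation in (2) implicit), and aside from the harmless typo $\mathrm{Der}(F)\cap\mathrm{Der}(F)$ that you immediately correct, there is nothing to change.
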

\begin{proof}
The first assertion is an immediate consequence of the definition of $\delta_{f,g}$.
The Leibniz rules for the derivation of a product and a power imply the second assertion.
Let $F_k=FG$. By $(2)$ and the Leibniz rule we have 
$$0=\delta_{f,g}(F_k)=G\delta_{f,g}(F)+F\delta_{f,g}(G),$$
that is $$G\delta_{f,g}(F)=-F\delta_{f,g}(G).$$
Since, by hypothesis $F$ and $G$ do not share any factor, this implies that $F \,|\, \delta_{f,g}(F)$ and $G \,|\, \delta_{f,g}(G)$ proving  that 
$\delta_{f,g}\in \mathrm{Der}(F)\cap \mathrm{Der}(G)$.
\end{proof}

\subsection{Eigenscheme associated to a pencil of curves}
With the same hypothesis as above, we consider the rational map 
$$ \phi_{f,g}: \p^2 \longrightarrow \p^2, \,\, p \mapsto [\nabla f\wedge \nabla g](p)$$
induced by the derivation $ \delta_{f,g}.$ We recall that the eigenscheme $\Gamma$ associated to 
$ \delta_{f,g}$ consists set theoretically of the union of the indeterminacy locus  of $\phi_{f,g}$  and the set of fixed points of $\phi_{f,g}$, i.e. those points $p$ such that $\phi_{f,g}(p)=p$. 
Since 
$p$ and $ [\nabla f\wedge \nabla g](p)$ are the same projective point, the point $p$ is orthogonal to both $\nabla f(p)$ and $\nabla g(p)$ ; this implies by Euler's formula for homogeneous polynomial that $f(p)=g(p)=0$ i.e. $p\in \cB=V(f)\cap V(g)$. The set of fixed points is finite but this is not always the case for the indeterminacy locus $V(\nabla f\wedge \nabla g)$  of $\phi_{f,g}$. 
\begin{rmk}
When $n=m$, 
 $V(\nabla f\wedge \nabla g)$ contains a curve if and only if the pencil contains  non reduced curve, say $f+g=u_1^{r_1}\cdots u_t^{r_t}$ where 
 $u_i=0$ are the reduced and irreducible factors of $f+g$ and at least one $r_i$ is greater than $2$. In this case replacing $f$ by $f+g$ in the pencil, one sees that the derivation $\delta_{f,g}$ is a multiple of  $\delta_{h,g}$ where $h=u_1\cdots u_t$. We then study the eigenscheme associated to $\delta_{h,g}$ (see for instance Example \ref{eigen-conics}).
\end{rmk}
Let us describe more precisely the eigenscheme associated to $\delta_{f,g}$.
\begin{prop}\label{key} Let $f\in R_n$ and $g\in R_m$ be two reduced polynomials without common factors such that $V(\nabla f\wedge \nabla g)$  is a finite scheme. Let $a$ and $b$ integers such that $\mathrm{lcm}(n,m)=a\times n=b\times m$. Let $\Gamma$ be the eigenscheme associated to $\delta_{f,g}.$
\begin{enumerate}
\item The number of  curves,  different from $V(g^{b})$ and $V(f^{a})$   in the pencil $\mathcal{C}=(f^{a},g^{b})$  that are singular outside the base locus $\cB=V(f)\cap V(g)$ is finite and bounded by 
$(n-1)^2+(n-1)(m-1)+(m-1)^2$ which is the length of the scheme 
$\cZ= V(\nabla f \wedge \nabla g)$.
\item The scheme $\Gamma$ is the union of the schemes $\cB$ and $\cZ$. 
\end{enumerate}
\end{prop}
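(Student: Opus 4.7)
My plan is to handle (1) and (2) together by first characterizing, via the gradient equations defining singular points of pencil members, which points $p\notin B$ can arise as such singularities, and then matching the resulting scheme with the eigenscheme $\Gamma$ through a length comparison.

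For part (1), I would start from the observation that $p$ is a singular point of a member $V(\alpha f^{a}+\beta g^{b})$ with $\alpha\beta\neq 0$ exactly when
$$\alpha a f^{a-1}(p)\nabla f(p)+\beta b g^{b-1}(p)\nabla g(p)=0 \quad\text{and}\quad \alpha f^{a}(p)+\beta g^{b}(p)=0.$$
If $p\notin B$, the right-hand equation together with $\alpha\beta\neq 0$ forces $f(p)g(p)\neq 0$, and then the gradient equation forces $\nabla f(p)\parallel \nabla g(p)$, so $p\in V(\nabla f\wedge \nabla g)$. Conversely, the ratio $(\alpha:\beta)$ is then determined uniquely by $\alpha f^{a}(p)+\beta g^{b}(p)=0$, so distinct pencil members cannot share a singularity off $B$. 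Since $V(\nabla f\wedge \nabla g)$ is finite by hypothesis, the number of such members is at most its length. To identify this length with $(n-1)^{2}+(n-1)(m-1)+(m-1)^{2}$, I would apply Porteous' formula to the expected codimension $2$ degeneracy locus of
$$\mathcal{O}_{\p^{2}}(-(n-1))\oplus \mathcal{O}_{\p^{2}}(-(m-1))\xrightarrow{\,[\nabla f\,|\,\nabla g]\,}\mathcal{O}_{\p^{2}}^{\,3},$$
which yields $c_{2}=(n+m-2)^{2}-(n-1)(m-1)=(n-1)^{2}+(n-1)(m-1)+(m-1)^{2}$.

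For part (2), the containment $V(\nabla f\wedge \nabla g)\subseteq \Gamma$ is immediate, since vanishing of $(R_{1},R_{2},R_{3})=\nabla f\wedge \nabla g$ kills every $2\times 2$ minor of the defining matrix of $\Gamma$. For $B\subseteq \Gamma$ I would invoke Euler's identity $x\partial_{x}f+y\partial_{y}f+z\partial_{z}f=nf$ and its analogue for $g$: at any $p\in B$ the vector $(x,y,z)(p)$ is orthogonal to both $\nabla f(p)$ and $\nabla g(p)$, hence parallel to their cross product $(\nabla f\wedge\nabla g)(p)$, so the $2\times 2$ minors of $\bigl(\begin{smallmatrix} x & R_{1}\\ y & R_{2}\\ z & R_{3}\end{smallmatrix}\bigr)$ vanish at $p$. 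For the reverse inclusion set-theoretically, a point $p\in \Gamma$ with $(R_{1},R_{2},R_{3})(p)\neq 0$ satisfies $(R_{1},R_{2},R_{3})(p)=\lambda(x,y,z)(p)$ for some $\lambda\neq 0$; taking the dot product with $\nabla f(p)$ and using $\nabla f\cdot(\nabla f\wedge\nabla g)\equiv 0$ yields $\lambda nf(p)=0$, hence $f(p)=g(p)=0$, so $p\in B$.

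To promote this set-theoretic equality to the scheme-theoretic identity claimed in the proposition, I would compare lengths: the resolution appearing just before Lemma~\ref{contain-gamma1} gives $\mathrm{length}(\Gamma)=1+(n+m-2)+(n+m-2)^{2}$, Bezout gives $\mathrm{length}(B)=nm$, and the Porteous computation from part (1) gives $\mathrm{length}(V(\nabla f\wedge \nabla g))=(n-1)^{2}+(n-1)(m-1)+(m-1)^{2}$; a direct expansion verifies $\mathrm{length}(\Gamma)=\mathrm{length}(B)+\mathrm{length}(V(\nabla f\wedge \nabla g))$, which together with the two scheme-theoretic containments above forces $\Gamma=B\cup V(\nabla f\wedge \nabla g)$. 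I expect the main obstacle to be controlling potential overlaps $B\cap V(\nabla f\wedge \nabla g)$ coming from tangencies between $V(f)$ and $V(g)$: at such a point the scheme-theoretic union has length strictly smaller than the sum of lengths, so one must deduce from the finiteness hypothesis on $V(\nabla f\wedge \nabla g)$ that no such overlap can occur (or else account for it by a local computation). Establishing this disjointness is the technical heart of the argument.
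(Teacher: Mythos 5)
Your part (1) is essentially the paper's argument: the same evaluation of $\nabla(\lambda f^a+\mu g^b)$ at a point off $B$, and the same length $(n-1)^2+(n-1)(m-1)+(m-1)^2$ (the paper reads it off the resolution $0\to \cO_{\p^2}(1-n)\oplus\cO_{\p^2}(1-m)\to\cO_{\p^2}^3\to \mathcal{J}(n+m-2)\to 0$ rather than via Porteous, but that is the same computation).

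Part (2), however, has a genuine gap, and it sits exactly where you locate it. Your strategy is: two scheme-theoretic containments $B\subseteq\Gamma$, $Z=V(\nabla f\wedge\nabla g)\subseteq\Gamma$, plus the length identity $\mathrm{length}(\Gamma)=\mathrm{length}(B)+\mathrm{length}(Z)$, hence equality. This only closes if $B$ and $Z$ have disjoint support (or if you do the local work at overlap points), and you propose to rescue it by showing that the finiteness of $Z$ rules out overlaps. That is false: whenever $V(f)$ and $V(g)$ are tangent at a point $p$, we have $p\in B$ and $\nabla f(p)\wedge\nabla g(p)=0$, so $p\in B\cap Z$, while $Z$ can perfectly well remain finite. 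The paper's own Example \ref{conics} (osculating conics, $f=xz$, $g=z^2-xy$) is exactly this situation: $\Gamma$ has length $7=\mathrm{length}(B)+\mathrm{length}(Z)=4+3$, yet the supports of $B$ and $Z$ meet at the tangency point, so the naive scheme-theoretic union $V(\mathcal{J}_B\cap\mathcal{J}_Z)$ has length strictly less than $7$ and cannot equal $\Gamma$. The statement's ``union'' must therefore be read in the residual sense, and the paper proves it that way: a snake-lemma diagram comparing the resolutions of $\mathcal{J}_\Gamma(n+m-1)$ (by $(\delta_E,\delta_{f,g})$) and of $\mathcal{J}_B(n+m-1)$ (by the Koszul complex of $(f,g)$) produces the exact sequence
\begin{equation*}
0 \longrightarrow \mathcal{J}_{\Gamma}(n+m-1) \longrightarrow \mathcal{J}_{B}(n+m-1) \longrightarrow \cO_{Z} \longrightarrow 0,
\end{equation*}
which simultaneously exhibits $B\subseteq\Gamma$, identifies the residual of $B$ in $\Gamma$ with $Z$, and yields the degree additivity as a corollary. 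Your set-theoretic analysis is correct as far as it goes, but without this (or an equivalent local computation at tangency points) the scheme-theoretic claim is not established.
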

\begin{proof}
To prove $1)$, assume that $\lambda f^a+\mu g^b$ is singular at $p\notin \cB$. Then 
$\nabla (\lambda f^a+\mu g^b)(p)=0$. This gives 
$$ \nabla (\lambda f^a+\mu g^b)(p)=a\lambda f(p)^{a-1}\nabla f(p)+
b\mu g(p)^{b-1}\nabla g(p)=0,$$
with by hypothesis $f(p)\neq 0$ or $g(p)\neq 0$. So the above equation is a relation between $\nabla f(p)$ and $\nabla g(p)$ proving that $p\in \cZ$. This last scheme is defined by 
$$\begin{CD}
     0@>>> \cO_{\p^2}(1-n)\oplus \cO_{\p^2}(1-m)  @>(\nabla f, \nabla g)>> 
      \cO_{\p^2}^3 @>>>  {\mathcal{J}}_{\cZ}(n+m-2) @>>>0
    \end{CD}$$
from which it follows that it has length $(n-1)^2+(n-1)(m-1)+(m-1)^2$.
 
 \smallskip
 
For $2)$, we consider the three schemes, $\cB=V(f)\cap V(g)$, $\cZ=V(\nabla f \wedge \nabla g)$, and the eigenscheme $\Gamma$ of the canonical derivation $\delta_{f,g}$. First of all note that 
$$ \mathrm{deg}(\Gamma)=\mathrm{deg}(\cB)+\mathrm{deg}(\cZ).$$
In fact, more is true: $\Gamma$ is also the union of $\cB$ and $\cZ$. To see this, consider the commutative diagram below:
$$\begin{CD}
  @. \cO_{\p^2}(2-n-m)  @=
      \cO_{\p^2}(2-n-m) @.  @.\\
   @. @VVV @V{\delta_{f,g}}VV @. \\
     0@>>> \begin{array}{c}\cO_{\p^2}(-1)\\ \oplus\\ \cO_{\p^2}(2-n-m)\end{array}  @>(\delta_E,\delta_{f,g})>> 
      \cO_{\p^2}^3 @>>>  \mathcal{J}_{\Gamma}(n+m-1) @>>>0\\
      @. @VVV @V(\nabla f, \nabla g)VV @VVV  \\
     0 @>>>  \cO_{\p^2}(-1)@>(f,g)>> \begin{array}{c}\cO_{\p^2}(n-1)\\ \oplus \\ \cO_{\p^2}(m-1)\end{array} @>>>  \mathcal{J}_{\cB}(n+m-1)  @>>>0\\
      @. @. @VVV @VVV  \\
   @. @. \cO_{\cZ} @= \cO_{\cZ}   
    \end{CD}$$ 
 The claim follows from the rightmost vertical exact sequence.
\end{proof}

\begin{eg} \label{eigen-conics}
The eigenscheme associated to a pencil of conics with a finite base locus is a finite scheme of length $7$.
When the conics meet in four distinct points, it consists in these $4$ base points plus the $3$ singular points of the singular conics of the pencil.
When the conics meet in three points, it consists in the union of the $2$ singular points of the two singular conics of the pencil plus the scheme of length $5$ supported by the base locus (1+1+3) (See Subsection \ref{conics} for more details). When the conics meet in a quadruple point, the eigenscheme is a finite scheme of length $3$; indeed it is the eigenscheme of the canonical derivation associated to a smooth conic of the pencil and to the reduced line (tangent at the quadruple point) appearing as a double line in the pencil.
\end{eg}

We are now ready to state the main result of this section.

\begin{thm}\label{main-thm}
Let $f\in R_n$ and $g\in R_m$ be two reduced polynomials without common factors such that $V(\nabla f\wedge \nabla g)$  is a finite scheme. Let $\Gamma$ be the eigenscheme associated to the canonical derivation $ \delta_{f,g}$,
let $V(F_k)$ be the union of $k\ge 2$ curves in the pencil generated 
by $(f^a,g^b)$ where $\mathrm{lcm}(n,m)=a\times n=b\times m$ and 
let $F$ be a polynomial of degree $N>n+m-1$ verifying $F \,|\, F_k$. Then $V(F)$
is free with exponents $(n+m-2, N-n-m+1)$ if and only if $F\in 
(I_{\Gamma})_{N}$.
\end{thm}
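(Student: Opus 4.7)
The plan is to derive this theorem as a direct application of Theorem \ref{de-de-new} to the canonical derivation $\delta = \delta_{f,g}$. Its coefficients, the $2\times 2$ minors $\nabla f\wedge\nabla g$, have degree $(n-1)+(m-1) = n+m-2$, so I would take $a = n+m-2$ and $d = N$ in the notation of Theorem \ref{de-de-new}; the hypothesis $N > n+m-1$ ensures $d \geq a+1$ as required there.

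To invoke Theorem \ref{de-de-new} I must verify that (i) $\delta_{f,g}$ is a nonzero irreducible derivation with finite eigenscheme, and (ii) $F \in K(\delta_{f,g})_N$. For (i), the standing assumption that $V(\nabla f\wedge\nabla g)$ is a finite scheme immediately rules out any common factor among the coefficients of $\delta_{f,g}$ (such a factor would carve out a positive-dimensional component of $V(\nabla f\wedge\nabla g)$), so $\delta_{f,g}$ is nonzero and irreducible; and by Proposition \ref{key}(2) its eigenscheme $\Gamma$ is the union of the finite schemes $B$ and $V(\nabla f\wedge\nabla g)$, hence finite. For (ii), I would invoke Lemma \ref{deriv-can}(2) to obtain $\delta_{f,g}\in\mathrm{Der}_0(F_k)\subseteq\mathrm{Der}(F_k)$; since $F_k$ is reduced (as the defining polynomial of a union of distinct curves of the pencil) and $F\mid F_k$, I can factor $F_k = F\cdot G$ with $F$ and $G$ sharing no irreducible factor, and then Lemma \ref{deriv-can}(3) upgrades this to $\delta_{f,g}\in\mathrm{Der}(F)$, that is, $F\in K(\delta_{f,g})_N$.

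With (i) and (ii) in hand, Theorem \ref{de-de-new} applied to the derivation $\delta_{f,g}$ (of degree $a=n+m-2$) and the polynomial $F$ (of degree $d=N$) yields at once the desired equivalence: $V(F)$ is free with exponents $(n+m-2,\,N-n-m+1)$ if and only if $V(F)\supset\Gamma$, equivalently $F\in(I_\Gamma)_N$. I do not anticipate a serious obstacle: the substantive geometric content---identifying $\Gamma$ as $B\cup V(\nabla f\wedge\nabla g)$ and controlling its length, and ensuring via the Leibniz rule that $\delta_{f,g}$ respects coprime factorizations of pencil members---has already been carried out in Proposition \ref{key} and Lemma \ref{deriv-can}, so the present theorem is essentially a repackaging. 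The one point to watch carefully is the ``irreducible'' hypothesis in Theorem \ref{de-de-new}, but as noted above this is automatic from the finite-eigenscheme assumption.
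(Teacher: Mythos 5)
Your proposal is correct and follows essentially the same route as the paper, which likewise presents Theorem~\ref{main-thm} as a direct consequence of Theorem~\ref{de-de-new} applied to the canonical derivation $\delta_{f,g}$ of degree $n+m-2$. If anything you are slightly more explicit than the paper in verifying the hypotheses of Theorem~\ref{de-de-new} (irreducibility of $\delta_{f,g}$ from the finiteness of $V(\nabla f\wedge\nabla g)$, finiteness of $\Gamma$ via Proposition~\ref{key}, and $F\in K(\delta_{f,g})_N$ via Lemma~\ref{deriv-can}), whereas the paper instead re-runs the Saito-criterion argument of Theorem~\ref{de-de-new} in both directions.
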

\begin{proof} This is a direct consequence of Theorem \ref{de-de-new}. Indeed, assume that $F\in 
(I_{\Gamma})_{N}$. Then by Lemma \ref{contain-gamma1} there exists $(Q_1,Q_2,Q_3)\in R_{N-(n+m-1)}^3$ such that 
$$\mathrm{det} 
\begin{pmatrix}  x & P_1  &Q_1  \\
y & P_2  &Q_2 \\
  z & P_3  &Q_3
   \end{pmatrix}= cF, \,\, c\in \kk^*.$$
   As proved in Theorem \ref{de-de-new}, the derivation $Q_1\partial_x+Q_2\partial_y+Q_3\partial_z$ belongs to  $\mathrm{Der}(F)$. Then we conclude by Saito's criterion.
   
\smallskip

Conversely, let us assume that $V(F)$ is free with the given exponents.
The module $\mathrm{Der}_0(F) $ is generated by two derivations $\delta_1$ and $\delta_2$ of degree $n+m-2$ and $N-n-m+1$ by hypothesis. Then $\delta_{f,g}=\delta_1$ if $n+m-2<N-n-m+1$ or $\delta_{f,g}=\delta_1+ h\delta_2$ for some form $h$ of degree $2m+2n-N-3$, if $n+m-2\ge N-n-m+1$. In both cases this gives 
$$ \mathrm{det} (\delta_E, \delta_{f,g},\delta_2)=\mathrm{det} (\delta_E, \delta_1,\delta_2)=cF, \,\, c\in \kk^*.$$
Denoting by  $\delta_2=Q_1\partial_x+Q_2\partial_y+Q_3\partial_z$ and by 
 $\m_i$  the $2\times 2$ minors of the matrix 
 \[
 \begin{pmatrix}  x & P_1    \\
y & P_2  \\
  z & P_3  
   \end{pmatrix}
   \]
this gives $Q_1\m_1+Q_2\m_2+Q_3\m_3=F$ where 
$F\in (I_{\Gamma})_{N}$ as wanted.
\end{proof}

\section{Examples}
\noindent This section is devoted to examples which illustrate the above theorems.

\subsection{First example} Consider the Hesse pencil of a smooth cubic and its Hessian cubic. They meet in $9$ distinct points and the singular curves of the pencil are four triangles. The canonical derivation $\delta_{f,g}$ has degree 4. The eigenscheme is a smooth set of $1+4+4^2=21$ points. This is the union of the 9 base points of the pencil and the 12 vertices of the triangles. Then the union of these four triangles is free with exponents $(4,7).$

\subsection{Second example.} \label{sextic-pencil} As a second example we consider the pencil of sextic curves  $(f^3,g^2)$  where $f(x,y,z)=x^2+y^2+z^2=0$ is a smooth conic and  $g(x,y,z)=xyz=0$ is a triangle
meeting in six different points $A=\{(1,i,0), (1,-i,0), (1,0,i),(1,0,-i),(0,1,i),(0,1,-i)\}$. The locus $V(\nabla f \wedge \nabla g)$ of the "singular points"  of the pencil has length $7$; it consists in  the three vertices of the triangle $(1,0,0), (0,1,0)$ and $(0,0,1)$;  and the four singular points of $f^3-27g^2=0$. These points in the ideal $(x(y^2-z^2),y(z^2-x^2),z(x^2-y^2))$ are $(1,1,1)$, $(-1,1,1)$, $(1,-1,1)$ and $(1,1,-1)$.
We consider the equation 
 $\nabla (\lambda f^3+\mu g^2)(p)=3\lambda f(p)^{2}\nabla f(p)+
2\mu g(p)\nabla g(p)=0,$ 
 and we evaluate at each of the four points. We find the same $\lambda=1$ and $\mu=-27, $ that is one curve $f^3-27g^2=0$ with four singular points.
 
 The canonical derivation $\delta_{f,g}$ has degree 3 and 
the eigenscheme associated to it is a smooth set of 13 points; it is the union of these two sets of simple points $A$ and $V(\nabla f \wedge \nabla g)$. The curve $xyz(f^3-27g^2)=0$, containing the eigenscheme, is free with exponents $(3,5)$ by Theorem \ref{de-de-new}. Moreover, since the curve 
$yz(f^3-g27^2)=0$ (or $xz(f^3-27g^2)=0$ or $xy(f^3-27g^2)=0$) still contains the eigenscheme $\Gamma_{f,g}$  it is also free with exponents $(3,4)$ (Apply again Theorem \ref{de-de-new}).

\subsection{Third example.} \label{conics} We consider a pencil of osculating conics. Up  to a linear transformation, these conics can be defined by $f:\, xz=0$ and $g:\, z^2-xy=0$. The canonical derivation $\delta_{fg}$ has degree $2$ and the associated eigenscheme $\Gamma$ has length $7$ and consists of one smooth point (the intersection point where there is no tangency) and a subscheme of length 6 supported at the point of tangency.
\begin{center}
\definecolor{xdxdff}{rgb}{0.49,0.49,1}
\definecolor{fftttt}{rgb}{1,0.2,0.2}
\begin{tikzpicture}[line cap=round,line join=round,>=triangle 45,x=1.0cm,y=1.0cm]
\clip(-2,-2) rectangle (5.5,2);
\draw [rotate around={5.17:(1.94,-0.13)}] (1.94,-0.13) ellipse (2.84cm and 1.26cm);
\draw [domain=-2:5.5] plot(\x,{(--69.69--62.06*\x)/106.07});
\draw [domain=-2:5.5] plot(\x,{(--0.37-1.94*\x)/1.4});
\begin{scriptsize}
\fill (-0.2,0.54) circle (1.5pt);
\fill (1.2,-1.4) circle (1.5pt);
\end{scriptsize}
\end{tikzpicture}
\end{center}
The ideal defining the eigenscheme is  
$$I_{\Gamma}=\langle x(z^2+xy),x^2z,z^3 \rangle, \mbox{ which we will write as } (u,v,w).$$ 
\begin{itemize}
    \item The equation of the curve 
$fg=xz^3-x^2yz=xw-yv$ belongs to $I_{\Gamma}$ proving that $fg=0$ is free with exponents $(1,2).$ 
\vskip .05in
\item The union of 3 smooth curves of the pencil is also free with exponents $(2,3).$ Indeed, without loss of generality we can choose 3 points in a pencil, corresponding to: 
$g=0$, $f+g=0$ and $f-g=0.$ Then $$g(f+g)(f-g)=w^2-v^2-yz(x-4y+3z)v-xy^2u \in I_{\Gamma}, $$
proving that $g(f+g)(f-g)=0$ is free with exponents $(2,3).$
We note that two smooth osculating curves are not free: they meet in degree 4 along the singular point instead of degree 6. Adding a third smooth curve allows us to reach degree 6. A last remark about this case: at the singular point $p$, the Tjurina number is $15$ and the Milnor number is 16; this shows that $p$ is not a quasihomogeneous singularity.
\vskip -.2in
\begin{figure}[h]
\begin{center}
\includegraphics[width=7in]{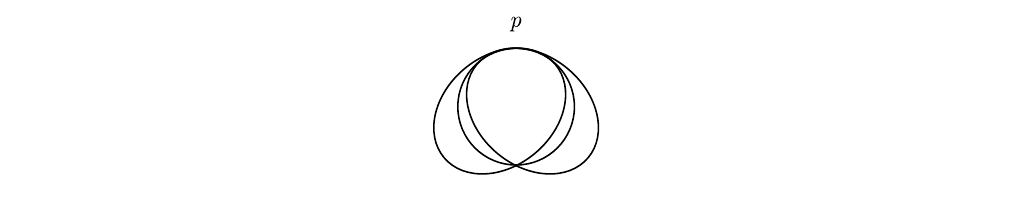}
\end{center}
\vskip -.2in
\end{figure}

%\begin{center}
%\begin{tikzpicture}[line cap=round,line join=round,>=triangle 45,x=1.0cm,y=1.0cm]
%\clip(-2.3,-1.8) rectangle (1.8,2.);
%\draw [rotate around={45.:(-0.26666666666666666,-0.06666666666666667)},line width=2.pt] (-0.26666666666666666,-0.06666666666666667) ellipse (1.1925695879998879cm and 0.9237604307034013cm);
%\draw [line width=2.pt] (0.,0.) circle (1.cm);
%\draw [rotate around={-45.:(0.26666666666666666,-0.06666666666666667)},line width=2.pt] (0.26666666666666666,-0.06666666666666667) ellipse (1.1925695879998879cm and 0.9237604307034013cm);
%\draw (-0.1,1.76) node[anchor=north west] {$p$};
%\end{tikzpicture}
%\end{center}
\item The union $f(f+g)(f-g)=0$ is also free with exponents $(2,3)$. After removing the smooth conic $f+g$ it remains free, and after removing the transverse line of $f$, we have that $x(f+g)(f-g)=0$ remains free. In this last case we again have a non-quasihomogeneous singularity at $p$: $11=\tau_p\neq \mu_p=12$.
%\vskip -.2in
\begin{figure}[h]
\begin{center}
\includegraphics[width=7in]{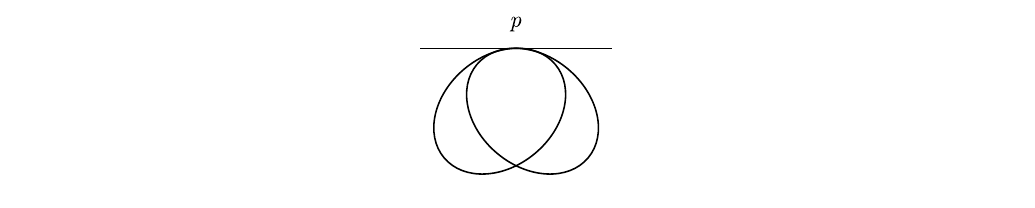}
\end{center}
\vskip -.2in
\end{figure}
\end{itemize}
%\begin{center}
 %   \begin{tikzpicture}[line cap=round,line join=round,>=triangle 45,x=1.0cm,y=1.0cm]
%\clip(-2.3,-1.8) rectangle (1.8,2.3);
%\draw [rotate around={45.:(-0.26666666666666666,-0.06666666666666667)},line width=2.pt] (-0.26666666666666666,-0.06666666666666667) ellipse (1.1925695879998879cm and 0.9237604307034013cm);
%\draw [rotate around={-45.:(0.26666666666666666,-0.06666666666666667)},line %width=2.pt] (0.26666666666666666,-0.06666666666666667) ellipse %(1.1925695879998879cm and 0.9237604307034013cm);
%\draw (-0.1,1.76) node[anchor=north west] {$p$};
%\draw [line width=2.pt,domain=-2.3:1.8] plot(\x,{(--1.-0.*\x)/1.});
%\end{tikzpicture}
%\end{center}
By Theorem~\ref{main-thm}, a union of conics and lines coming from the pencil will be free if and only if it contains the eigenscheme. So when such a union is free, adding a conic or a line from the pencil to this union remains free. More generally, when a union of curves from a pencil is free, it will remain free by adding smooth curves from the pencil:

\begin{prop} Let $f\in R_n$ and $g\in R_m$ be two reduced polynomials without common factors such that $V(\nabla f\wedge \nabla g)$  is a finite scheme. 
Let $V(F_k)$ be the union of $k\ge 2$ curves in the pencil generated 
by $(f^a,g^b)$ where $\mathrm{lcm}(n,m)=a\times n=b\times m$ and 
let $F$ be a polynomial of degree $N>n+m-1$ such that $F \,|\, F_k$. 

Assume that $V(F)$
is free with exponents $(n+m-2, N-n-m+1)$, and that $C_{\alpha,\beta}=\{\alpha f^a+\beta g^b=0\}$ is  smooth outside the base locus $\cB=V(f)\cap V(g)$.  Then, $V(F)\cup V(C_{\alpha,\beta})$
is free with exponents 
$(n+m-2, N+an-n-m+1)$.
\end{prop}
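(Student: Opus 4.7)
The plan is to apply Theorem \ref{main-thm} twice: first in the forward direction to $V(F)$ in order to extract ideal-theoretic information, and then in the converse direction to the enlarged polynomial $F \cdot C_{\alpha,\beta}$ to deduce its freeness. The conceptual point is that freeness in this pencil setting is controlled by membership in the single fixed ideal $I_{\Gamma}$ attached to the canonical derivation $\delta_{f,g}$, and such membership is automatically preserved under multiplication.

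First I will use the hypothesis that $V(F)$ is free with exponents $(n+m-2,\,N-n-m+1)$, together with the degree assumption $N > n+m-1$, to invoke Theorem \ref{main-thm} in the forward direction. This gives $F \in (I_{\Gamma})_N$. Multiplying by $C_{\alpha,\beta}$, which has degree $an = bm$, and using that $I_{\Gamma}$ is an ideal, I obtain $F \cdot C_{\alpha,\beta} \in (I_{\Gamma})_{N+an}$.

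Second, I will apply Theorem \ref{main-thm} in the converse direction to the polynomial $F \cdot C_{\alpha,\beta}$ of degree $N+an > n+m-1$. This polynomial divides the union $F_{k+1}$ of $k+1$ curves of the pencil obtained by adjoining $C_{\alpha,\beta}$ to the $k$ curves making up $F_k$ (we may harmlessly assume $C_{\alpha,\beta}$ is not already one of them, since otherwise the conclusion is trivial from the hypothesis). Theorem \ref{main-thm} then yields the desired freeness of $V(F \cdot C_{\alpha,\beta}) = V(F) \cup V(C_{\alpha,\beta})$ with exponents $(n+m-2,\,N+an-n-m+1)$.

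The main thing to check, and the place where the smoothness hypothesis on $C_{\alpha,\beta}$ actually enters, is that $F \cdot C_{\alpha,\beta}$ defines a reduced curve so that Theorem \ref{main-thm} may legitimately be applied. The factor $F$ is reduced by hypothesis, and $C_{\alpha,\beta}$ is reduced because it is smooth away from the finite base locus $B = V(f) \cap V(g)$ and hence generically reduced. Finally, $V(F)$ and $V(C_{\alpha,\beta})$ share no common irreducible component: any component of $V(F)$ is contained in some member of the pencil, and two distinct members of the pencil $(f^a,g^b)$ meet only along the finite set $B$. This is the only delicate step; the remainder is a direct consequence of Theorem \ref{main-thm}.
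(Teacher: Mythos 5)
Your proof is correct and follows essentially the same route as the paper: the paper's own argument is the one-line observation that $F\cdot C_{\alpha,\beta}\in \HH^0(\mathcal{J}_{\Gamma}(N+an))$ because $F\in \HH^0(\mathcal{J}_{\Gamma}(N))$ and $I_\Gamma$ is an ideal, followed by an appeal to Theorem~\ref{main-thm}. Your write-up simply makes explicit the hypotheses of Theorem~\ref{main-thm} (reducedness of $C_{\alpha,\beta}$, divisibility of $F\cdot C_{\alpha,\beta}$ into a union of pencil members) that the paper leaves tacit.
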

\begin{proof}
Follows from Theorem \ref{main-thm}: $F\cup C_{\alpha,\beta} \in 
\HH^0(\mathcal{J}_{\Gamma}(N+an))$ since $F\in 
\HH^0(\mathcal{J}_{\Gamma}(N))$.
\end{proof}
As a consequence, it follows that the following types of unions of smooth conics $C,D$ are free: First, if $C\cap D$ consists of $2$ points, one simple and one triple point, then three smooth members of the pencil are needed to contain the eigenscheme. Such a union is free with exponents $(2,3)$. Second, if $C\cap D$ meets in a quadruple point, the union of two smooth members of the pencil contains the eigenscheme (of length $3$) and so is free with exponents $(1,2)$.

\section{Reflection arrangements and nets}
In this section we investigate an example where we add curves coming from a {\em net}, rather than a pencil. The net $(f,g,h)$ we study is defined by the radical ideal of $Jac(F)$, where $V(F)$ is a complete reflection arrangement.
We start by fixing the complete reflection arrangement $F=xyz(x^n-y^n)(x^n-z^n)(y^n-z^n)$; the arrangement $V(F)$ is free with exponents $(n+1,2n+1)$. Computing the partial derivatives we have
$$ \left \lbrace 
    \begin{array}{lcl}
    \partial_x F &= & \frac{F}{x}+\frac{nx^{n-1}F}{x^n-y^n}+\frac{nx^{n-1}F}{x^n-z^n}\\
   \partial_y F&= & \frac{F}{y}-\frac{ny^{n-1}F}{x^n-y^n}+\frac{ny^{n-1}F}{y^n-z^n}\\
    \partial_z F&= & \frac{F}{z}-\frac{nz^{n-1}F}{x^n-z^n}-\frac{nz^{n-1}F}{y^n-z^n}
     \end{array}
       \right.$$
There is a natural derivation of degree $2n+1$, obtained by removing the denominators, which is: 
$$\delta =x(x^n-y^n)(x^n-z^n)\partial_x+y(x^n-y^n)(y^n-z^n)\partial_y+
z(x^n-z^n)(y^n-z^n)\partial_z.$$

\smallskip

\noindent A derivation tangent to $V(F)$ of degree $n+1$ is given in the following lemma of \cite{OT}.

\begin{lem} Let $\mu =x^{n+1}\partial_x+y^{n+1}\partial_y+z^{n+1}\partial_z$ be a derivation. Then, 
$$ \mu \in \mathrm{Der}(x)\cap \mathrm{Der}(y)\cap \mathrm{Der}(z)\cap \mathrm{Der}(x^n-y^n)\cap \mathrm{Der}(x^n-z^n)\cap \mathrm{Der}(y^n-z^n).$$
\end{lem}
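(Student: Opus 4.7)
The proof is essentially a direct verification, one factor at a time. I would organize it as follows.

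First, recall that a derivation $\mu \in \mathrm{Der}(h)$ iff $\mu(h) \in \langle h \rangle$ in $R$. So it suffices to evaluate $\mu$ on each of the six irreducible factors of $F$ and check divisibility.

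For the three coordinate hyperplanes this is immediate: $\mu(x) = x^{n+1}$, and clearly $x \mid x^{n+1}$, so $\mu \in \mathrm{Der}(x)$. The cases of $y$ and $z$ are symmetric.

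For the three binomial factors, I would use the Leibniz rule together with the elementary factorization $a^2 - b^2 = (a-b)(a+b)$. Explicitly,
\[
\mu(x^n - y^n) \;=\; n x^{n-1}\cdot x^{n+1} - n y^{n-1}\cdot y^{n+1} \;=\; n(x^{2n} - y^{2n}) \;=\; n(x^n - y^n)(x^n + y^n),
\]
which is manifestly a multiple of $x^n - y^n$, giving $\mu \in \mathrm{Der}(x^n - y^n)$. The same computation, with the appropriate permutation of variables, handles $x^n - z^n$ and $y^n - z^n$.

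There is no real obstacle here: the six required divisibilities all reduce to the identity $\mu(t^n - s^n) = n(t^n - s^n)(t^n + s^n)$ for $t, s \in \{x, y, z\}$, together with $\mu(t) = t^{n+1}$ for the linear factors. Together with the fact that $\mathrm{Der}(F_1)\cap\mathrm{Der}(F_2)\subseteq \mathrm{Der}(F_1F_2)$, this also recovers $\mu \in \mathrm{Der}(F)$ as a corollary, which is the reason this derivation is useful for the subsequent net construction.
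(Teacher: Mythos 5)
Your proof is correct and follows exactly the same route as the paper: evaluate $\mu$ on each factor, observe $\mu(x)=x^{n+1}\in\langle x\rangle$, and use $\mu(x^n-y^n)=n(x^{2n}-y^{2n})=n(x^n-y^n)(x^n+y^n)$, with the remaining cases by symmetry. The closing remark about $\mathrm{Der}(F_1)\cap\mathrm{Der}(F_2)\subseteq\mathrm{Der}(F_1F_2)$ is a harmless (and accurate) addition beyond what the paper states here.
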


\begin{proof}
As $\mu(x)=x^{n+1},$ $\mu \in \mathrm{Der}(x)$, and as $\mu(x^{n}-y^n)=n(x^{2n}-y^{2n})=n(x^n+y^n)(x^n-y^n),$ we have $\mu \in \mathrm{Der}(x^{n}-y^n).$ This also occurs for $y$, $z$, $(y^n-z^n)$ and $(x^n-z^n).$
\end{proof}
\noindent A quick computation shows that 
$$\mathrm{det}(\delta_E,\mu, \delta)=F,$$
which by Saito's criterion proves that $F$ is free. However, we could also prove freeness of $F$ by computing the eigenscheme
$\Gamma_{\mu}$ associated to $\mu$. It is defined by 
the maximal minors  of the matrix $(\delta_E,\mu)$. The defining ideal is  
   $$I_{\Gamma_{\mu}}=(yz(y^n-z^n), xz(z^n-x^n), xy(x^n-y^n)).$$
The length of $\Gamma$ is $(n+1)^2+(n+1)+1=n^2+3n+3$; $\Gamma$ consists of the set of singular points of $F=0$, all with multiplicity one.
Hence it coincides (set theoretically) with the support of the scheme defined by the radical ideal of $Jac(F)$. The curve $F=0$ contains this scheme $\Gamma_{\mu}$.

\begin{thm}
Let $F=xyz(x^n-y^n)(x^n-z^n)(y^n-z^n)=0$ be the complete reflection arrangement  of $3n+3$ lines. 
Let $G_i$  be a general polynomial in the net $(I_{\Gamma_{\mu}})_{n+2}$.  Then 
\begin{enumerate}
\item $FG_i=0$ is free with exponents $(2n+2,2n+2).$
\item $FG_1G_2=0$ is free with exponents $(2n+2,3n+4).$
\item $F\prod_{1\le i\le k} (a_iG_1+b_iG_2)=0$ is free with exponents $(2n+2,(k+1)n+2k).$
\item $FG_1G_2G_3=0$ is free with exponents $(3n+4,3n+4).$
%\item $FG_1G_2G_3G_4=0$ is free with exponents $(3n+5,4n+5),$ \red{????}
%\item $FG_1G_2G_3G_4G_5=0$ is not free. \red{????}
\end{enumerate}
\end{thm}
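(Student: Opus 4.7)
The plan is to apply Saito's criterion to each of (1)--(4) with two derivations of the predicted degrees, all of the canonical pencil form $\delta_{P,Q} = [\nabla P\wedge\nabla Q]\cdot\nabla$ with $P,Q$ drawn from the net $(I_{\Gamma_\mu})_{n+2}$ (possibly as products). The whole argument hinges on two technical facts about the generators $u_1 = yz(y^n-z^n)$, $u_2 = xz(z^n-x^n)$, $u_3 = xy(x^n-y^n)$ of $I_{\Gamma_\mu}$; once these are in hand, each of the four cases reduces to a symbolic computation via one master identity.

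The first fact is the determinantal identity $\det(\nabla u_1,\nabla u_2,\nabla u_3) = c_n F$ for some $c_n\in \kk^*$. Both sides have degree $3n+3$, so it suffices to show that each irreducible factor of $F$ divides the left--hand side. On $x=0$ the columns $\nabla u_1$ and $\nabla u_3$ both reduce to vectors of the form $(0,\ast,0)$ and are linearly dependent, giving $x\mid\det$; $S_3$--symmetry handles $y$ and $z$. On $x^n=y^n$ the column $\nabla u_3$ becomes a scalar multiple of $(y,-x,0)$ and the remaining $2\times 2$ minors of $(\nabla u_1 | \nabla u_2)$ cancel in pairs, forcing the determinant to vanish; symmetry handles the other two factors of $F$.

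The second fact is that $\delta_{G_1,G_2}\in\mathrm{Der}(F)$ for every $G_1,G_2\in (I_{\Gamma_\mu})_{n+2}$. Its proof starts from $u_1u_2u_3 = -xyz F$, which upon differentiation yields
\[
xyz\,\nabla F = -F(yz,xz,xy) - \bigl(u_2u_3\,\nabla u_1 + u_1u_3\,\nabla u_2 + u_1u_2\,\nabla u_3\bigr).
\]
Writing $G_j = \vec a_j\cdot \vec u$ with $\vec u = (u_1,u_2,u_3)$ and substituting into $xyz\,\delta_{G_1,G_2}(F) = \det(\nabla G_1,\nabla G_2, xyz\,\nabla F)$, the first fact reduces the problem to two divisibilities: $xyz \mid u_iu_j$ (direct), and $xyz \mid \det(\nabla G_1,\nabla G_2,(yz,xz,xy))$, which holds because the three $2\times 2$ minors $M_x,M_y,M_z$ of $(\nabla G_1 | \nabla G_2)$ vanish on the corresponding coordinate lines. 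This last point is the geometric heart of the lemma: since $u_1$ has no $x$, $u_2$ has no $y$, $u_3$ has no $z$, the restriction $G_j|_{x=0} = a_j u_1|_{x=0}$ forces the two columns of $(\nabla G_1|\nabla G_2)|_{x=0}$ to be proportional. Combining Euler's formula with the vector identity $(\vec a\times \vec b)\times (\vec c\times \vec d) = [\vec a\,\vec b\,\vec d]\vec c - [\vec a\,\vec b\,\vec c]\vec d$ and the first fact then produces the master formula
\[
\det(\delta_E,\delta_{G_i,G_j},\delta_{G_k,G_l}) = (n+2)\,c_n\,F\cdot\bigl[\Delta_{ijl}G_k - \Delta_{ijk}G_l\bigr], \quad \Delta_{abc} := \det(\vec a_a,\vec a_b,\vec a_c).
\]

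Each case is now Saito's criterion applied to an explicit pair of derivations, with tangency to the claimed free curve coming from the second fact together with Lemma \ref{deriv-can} and the Leibniz identity $\delta_{G,G_jG_k} = G_k\delta_{G,G_j}+G_j\delta_{G,G_k}$, which keeps products inside the $R$--module $\mathrm{Der}(F)$. For (1) take $\delta_{G_1,G_2}$ and $\delta_{G_3,G_4}$ with $G_1 = G_i$ and $G_4$ chosen in the pencil $[G_1,G_3]$, so $\Delta_{134}=0$; the Pl\"ucker relation for four vectors in $3$--space then collapses the bracket in the master formula to $-\Delta_{234}G_i$, giving $cFG_i$ with $c\ne 0$. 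For (2) use $\delta_{G_1,G_2}$ and $\delta_{G_3,G_1G_2}$; Leibniz expansion and the master formula give $-2(n+2)c_n\Delta_{123}\,FG_1G_2$. For (3) use $\delta_{G_1,G_2}$ and $\delta_{G_3,H_1\cdots H_k}$ with $H_j = a_jG_1+b_jG_2$; since each $H_j$ lies in the pencil $[G_1,G_2]$ one has $\Delta(G_1,G_2,H_j)=0$, so the Leibniz sum collapses to $-k(n+2)c_n\Delta_{123}\,F\prod H_j$. For (4) use $\delta_{G_3,G_1G_2}$ and $\delta_{G_1,G_2G_3}$; the Leibniz expansion yields three surviving terms summing to $3(n+2)c_n\Delta_{123}\,FG_1G_2G_3$. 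The main obstacle is the second fact, since this is the only place where the hypothesis $G_j\in I_{\Gamma_\mu}$ is used essentially --- and specifically where the ``missing--variable'' property of $u_1,u_2,u_3$ does the work. Once it is in place, the master formula and the nonvanishing of $\Delta_{123},\Delta_{234}$ for generic $G_i$ reduce the four cases to formal consequences of Saito's criterion.
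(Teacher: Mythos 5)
Your proposal is correct, but it takes a genuinely different route from the paper for most of the theorem. The shared core is the determinantal identity $\det(\nabla u_1,\nabla u_2,\nabla u_3)=c_nF$ (the paper computes $c_n=n(n+1)$ directly) and the fact that the canonical derivations $\delta_{G_i,G_j}$ are tangent to $F$; but where you prove the latter by differentiating $u_1u_2u_3=-xyzF$ and checking that the three $2\times2$ minors of $(\nabla G_1\,|\,\nabla G_2)$ vanish on the corresponding coordinate lines, the paper instead runs a chain of Leibniz/coprimality reductions through the factors $yz$, $x$, $xyz$, $fgh$ to land on $\delta_i(F)\subset(F)$ --- two valid proofs of the same key lemma. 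The larger divergence is in parts (2)--(4): the paper, having established (1), invokes its eigenscheme machinery (Theorem \ref{de-de-new} and Lemma \ref{contain-gamma1}) --- freeness of $FG_1$ forces $FG_1\supset\Gamma_\nu$ for the canonical derivation $\nu$ of the pencil $(G_1,G_2)$, whence $FG_1G_2\supset\Gamma_\nu$ and freeness propagates; part (4) needs an extra trick replacing $G_3\nu$ by an irreducible derivation. You instead exhibit an explicit second generator in every case ($\delta_{G_3,G_1G_2}$, $\delta_{G_3,H_1\cdots H_k}$, the pair $\delta_{G_3,G_1G_2},\delta_{G_1,G_2G_3}$) and verify Saito's determinant by hand via your master formula $\det(\delta_E,\delta_{G_i,G_j},\delta_{G_k,G_l})=(n+2)c_nF[\Delta_{ijl}G_k-\Delta_{ijk}G_l]$, which I have checked (it follows from $(\vec a\times\vec b)\times(\vec c\times\vec d)=[\vec a\,\vec b\,\vec d]\vec c-[\vec a\,\vec b\,\vec c]\vec d$, multilinearity of $\det(\nabla G_i,\nabla G_j,\nabla G_k)$ in the coefficient vectors, and Euler). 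All four determinant computations come out as claimed. Your approach is more computational but entirely self-contained and uniform --- it bypasses the eigenscheme theorems that are the paper's main point, and it produces both Saito generators explicitly; the paper's approach is shorter once (1) is known and better illustrates the ``contains the eigenscheme'' philosophy the article is advertising.

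Two small repairs to your sketch of the first fact. On $x=0$ the two columns that degenerate are $\nabla u_2$ and $\nabla u_3$ (the two generators divisible by $x$), each restricting to a vector of the form $(\ast,0,0)$; you named $\nabla u_1$ and $\nabla u_3$, and $\nabla u_1|_{x=0}$ is of the form $(0,\ast,\ast)$, not $(0,\ast,0)$. Also, your divisibility-plus-degree argument only yields $\det(\nabla u_1,\nabla u_2,\nabla u_3)=cF$ with $c$ possibly zero; you still need to evaluate at one point off $V(F)$ (or compute a leading coefficient) to see $c\neq 0$, since the nonvanishing of this constant is what makes $\Delta_{123}\neq 0$ suffice for Saito's criterion in all four cases. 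Both are one-line fixes, not gaps in the method.
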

 \begin{proof}
(1) The singular points of the net 
$$(I_{\Gamma_{\mu}})_{n+2}=\{ yz(y^n-z^n), xz(z^n-x^n), xy(x^n-y^n)\}=\{ f,g,h \}$$
define a curve in $\p^2$ with equation 
$$ \mathrm{det}(\nabla f, \nabla g,\nabla h)=0.$$
Computing this determinant we obtain
 $$ \mathrm{det}(\nabla f, \nabla g,\nabla h)= n(n+1) F.$$
We now consider the following three derivations of degree $(2n+2)$: 
 $$\delta_{fg}=\mathrm{det}(\nabla f, \nabla g,\nabla ), $$
  $$\delta_{fh}=\mathrm{det}(\nabla f, \nabla h,\nabla ), $$
   $$\delta_{gh}=\mathrm{det}(\nabla g, \nabla h,\nabla ). $$
   Given a curve $G_1=af+bg+ch=0$ in the net $(f,g,h)$,  we have 
  $$\delta_{fg}( af+bg+ch)= n(n+1)cF$$ 
  $$\delta_{fh}( af+bg+ch)=-n(n+1)bF, $$
   $$\delta_{gh}( af+bg+ch)= n(n+1)aF.$$
 This gives a pencil of derivations of degree $2n+2$  
 $$(\delta_1,\delta_2)=(b\delta_{fg}+c\delta_{fh},a\delta_{fh}+b\delta_{gh}) $$ 
 belonging to $\mathrm{Der}_0(af+bg+ch)=\mathrm{Der}_0(G_1).$
 
 \smallskip
 
 We want to prove that these derivations belong to $\mathrm{Der}((af+bg+ch)F)=\mathrm{Der}(FG_1)$. Since $\delta_i((af+bg+ch)F)=(af+bg+ch)\delta_i(F)$ it remains to prove that $\delta_i(F)\subset (F).$
 
 \medskip

  The relation $\delta_1(f)=0$ implies $\delta_1(yz)\subset (yz)$. Moreover $\delta_1(af+bg+ch)=0$ gives  $\delta_1(bg+ch)=0$ implying  $\delta_1(x)\subset (x).$ Finally $\delta_1(xyz)\subset (xyz)$. We have also $\delta_1(g)=cF\subset (g)$ and $\delta_1(h)=bF\subset (h)$, proving that $\delta_1(fgh)\subset (fgh).$ Since $fgh=xyzF$, the inclusions
 $\delta_1(xyz)\subset (xyz)$ and $\delta_1(fgh)\subset (fgh)$ imply $\delta_1(F)\subset (F)$.

When $G_1$ is general both derivations are non-proportional and since $\mathrm{deg}(FG_1)=4n+5$ this allows us to conclude that $FG_1=0$ is free with exponents $(2n+2,2n+2).$ 

\medskip

To prove items (2) and (3), note that after adding a new curve $G_2=a'f+b'g+c'h=0$ from  the net, we have only one derivation of degree $2n+2$ which is 
 $$\nu= \nabla(af+bg+ch)\wedge \nabla (a'f+b'g+c'h)=(ab'-a'b)\delta_{fg}+(ac'-a'c)\delta_{fh}+(bc'-b'c)\delta_{gh}.$$
This is the canonical derivation associated to the pencil of degree $n+2$ curves $(G_1,G_2)$.
 The derivation $\nu$ belongs to $\Der{FG_1}$. Since $FG_1=0$ is free with exponents $(2n+2,2n+2)$ this means that this curve contains the eigenscheme $\Gamma_{\nu}$ associated to $\nu$ by Theorem \ref{de-de-new}. The curve $FG_1G_2=0$ contains a fortiori this eigenscheme and $\nu \in \mathrm{Der}(FG_1G_2)$. By Theorem \ref{de-de-new} this proves the second and third assertions.
 
 \medskip
 
 (4) We denote by $\alpha$ a derivation of degree $3n+4$ such that $$\mathrm{det}(\delta_E,\nu,\alpha)=FG_1G_2.$$
 There is a natural derivation of degree $3n+4$ in $\mathrm{Der}(FG_1G_2G_3)$ which is 
 $G_3\nu.$ Note that 
 $$\mathrm{det}(\delta_E,G_3\nu,\alpha)=FG_1G_2G_3.$$
 Taking linear combinations allows us to transform the derivation  $G_3\nu$ to a irreducible derivation 
 $\nu_1\in \mathrm{Der}_0(FG_1G_2G_3)$ such that 
 $$\mathrm{det}(\delta_E,\nu_1,\alpha)=FG_1G_2G_3.$$
 By Lemma~\ref{contain-gamma1} this shows that $FG_1G_2G_3=0$ contains the eigenscheme associated to the derivation $\nu_1.$ Now since  $\nu_1\in \mathrm{Der}_0(FG_1G_2G_3)$, and we are done by Theorem \ref{de-de-new}.
 \end{proof}
 %
  %\medskip
  %
 % (5)  {\color{red} To be done}
  %
  % \medskip
  % 
 %  (6) {\color{red} To be done}
 % \end{proof}
%
\begin{remark}
Adding a fourth general $G_i$ from the net yields the polynomial $FG_1G_2G_3G_4$, which is free with exponents $(3n+5,4n+5).$ On the other hand, adding a fifth general $G_i$ yields $FG_1G_2G_3G_4G_5$, which is not free. We are investigating the behavior of freeness (in general) when adding elements of a net. 
\end{remark}

\end{document}